\newtheorem{theorem}{Theorem}[section]
\newtheorem{lemma}[theorem]{Lemma}
\newtheorem{proposition}[theorem]{Proposition}
\newtheorem{corollary}[theorem]{Corollary}
\newtheorem{definition}[theorem]{Definition}
\newtheorem{example}[theorem]{Example}
\newtheorem{remark}[theorem]{Remark}
\numberwithin{equation}{section}
\def\id{{\rm id}}
\date{}
\title{\bf On the Generators in the Category of Actions of Pomonoids on Posets and its Slices}
\author{{\bf Farideh Farsad}\and {\bf Ali Madanshekaf } \\
Department of Mathematics\\Faculty of Mathematics, Statistics and Computer Science\\
Semnan University\\ P. O. Box 35131-19111\\
Semnan\\
Iran\\ emails: faridehfarsad@yahoo.com\\
\qquad amadanshekaf@semnan.ac.ir}
\date{}
\begin{document}
\maketitle
\begin{abstract}
Let $S$ be a pomonoid, in this paper, {\bf Pos}-$S$, the category
of $S$-posets and $S$-poset maps, is considered. First, we
characterize some pomonoids on which all projectives in this
category are generator or free. Then, we study regular injectivity
and weakly regularly $d$-injectivity which lead to some
homological classification results for pomonoids. Among other
things, we get some relationships between regular injectivity in
the slice category {\bf Pos}-$S/B_S$ and generators or cyclic
projectives in {\bf Pos}-$S$.
\end{abstract}
%-----------------------------------------------------------------------
AMS {\it subject classification}: 06F05, 18A32, 18G05, 20M30, 20M50. \\
{\it Keywords}: $S$-poset; generator; projective; regular injective.
%-----------------------------------------------------------------------
%%%%%%%%%%%%%%%%%%%%%%%%%%%%%%%%%%%%%%%%%%%%%%%%%%%%%%%%%%%%%%
\section{\bf{Introduction and Preliminaries}}
%%%%%%%%%%%%%%%%%%%%%%%%%%%%%%%%%%%%%%%%%%%%%%%%%%%%%%%%%%%%%%
Although there exist many papers which investigate various
properties of generator $S$-acts, among them, there seems to be
known very little on generators $S$-posets. V. Laan investigated
some properties of  generator $S$-posets in \cite{L}. Continuing
this study, in this paper, after some introductory results in
section 1, we attempt in section 2 to collect new results on
generators in {\bf Pos}-$S$ to reply on the questions of
homological classification of pomonoids.

$\mathcal{M}$-injective objects in the slice category
$\mathcal{C}/B$, for any $B$ in $\mathcal{C}$, form the right part
of a weak factorization system that has morphisms of $\mathcal{M}$
as the left part (see~\cite{B.R}). Here, we consider the same case
in the slice category {\bf Pos}-$S/B_S$ of right $S$-poset maps
over $B_S,$ where $B_S$ is an arbitrary $S$-poset.
In section 3, we first, find some conditions for when
all generators are regular $d$-injective or weakly regularly
injective. Finally, we prove that every Emb-injective object in
{\bf Pos}-$S/B_S$ is split epimorphism. By this fact, we received
to some generators and cyclic projectives in {\bf Pos}-$S$ and in
the category of actions of endomorphism pomonoids on posets.

For the rest of this section, we give some preliminaries about
$S$-acts, $S$-posets and slice category which we will need in the
sequel.

Let $S$ be a monoid with identity $1$. Recall that a (right) {\it
$S$-act} is a set $A$ equipped with a map $\mu: A\times S\to A$
called its action, such that, denoting $\mu(a, s)$ by $as$, we
have $a1 = a$ and $a(st)=(as)t$, for all $a\in A$, and $s, t\in
S$. The category of all $S$-acts, with action-preserving ($S$-act)
maps ($f : A\to B$ with $f(as) = f(a)s$, for $s\in S, a\in A$), is
denoted by {\bf Act}-$S$. For instance, take any monoid $S$ and a
non-empty set $A$. Then $A$ becomes a right $S$-act by defining
$as = a$ for all $a\in A$, $s\in S$, we call that $A$ an $S$-act
with trivial action. Clearly $S$ itself is an $S$-act with its
operation as the action. For more information about $S$-acts
see~\cite{K.K.M}.

A monoid  $S$ is said to be a {\it pomonoid}
 if it is also a poset whose partial order
$\leq$ is compatible with the binary operation,
 i.e., $s\leq t, s'\leq t' \ {\rm imply } \ ss'\leq tt'$ (see~\cite{bir}). In this paper
$S$ denotes a pomonoid with an arbitrary order, unless otherwise
stated.

On a monoid $S$ we define the following relations: for every $s,
t\in S$

\begin{enumerate}
\item $s\mathcal{R}t$ if $sS=tS$.
\item $s\mathcal{J}t~~$ if $SsS=StS$.
\item $s\mathcal{D}t$ if there exists $u\in S$ with $sS=uS$ and
$St=Su$.
\end{enumerate}
These relations are called Green's relations on $S$
(see \cite{K.K.M}). Here, we consider these notions for pomonoid
$S$ and supply some suitable results.

Let $S$ be a pomonoid. A {\it {\rm(}right{\rm)} $S$-poset} is a
poset $A$ which is also an $S$-act whose action $\mu : A\times
S\rightarrow A$ is order-preserving, where $A\times S$ is
considered as a poset with componentwise order. The category of
all $S$-posets with action preserving monotone maps is denoted by
{\bf Pos}-$S$. Clearly $S$ itself is an $S$-poset with its
operation as the action. Left $S$-poset can be defined analogously
(see~\cite{F.M}). A left $T$-poset ($_T A$) and right $S$-poset
($A_S$) is called $T$-$S$-biposet (and denoted by $_T A_S$) when
$(ta)s=t(as)$ for every $s\in S, t\in T$ and $a\in A$. We remind
the following results from \cite{L}:

{\rm (i)} For every $A_S$ in {\bf Pos}-$S$, consider the set
End($A_S)={\bf Pos}_S(A, A)$ as a pomonoid with respect to
composition and pointwise order. Also, we define the left
End($A_S$)-action on $A$ by $f\cdot a=f(a)$, for every $a\in A, f\in$
End($A_S$), so that one has $_{\text{End}(A_S)} A_S$.

{\rm (ii)}  The following mappings are pomonoid homomorphisms,
\begin{center}
$\rho: S\to \text{End}(_TA); \quad s\mapsto \rho_s$\\
$\lambda: T\to \text{End}A_S); \quad t\mapsto \lambda_t,$
\end{center}
where $\rho_s:~_{T}A\rightarrow~_TA, \ a\mapsto as$ and $\lambda_t: A_S\to
A_S,\ a\mapsto ta$ are morphisms in $T$-{\bf Pos} and {\bf
Pos}-$S$, respectively.

{\rm (iii)} For every $T$-$S$-biposet $_T A_S$ recall that if
$B\in${\bf Pos}-$S$ then the set ${\bf Pos}_S(B , A$) of all $S$-poset
maps from $B_S$ to $A_S$ is an object in $T$-{\bf Pos} with the action defined by
$$t\cdot f=\lambda_tf$$
for every $f\in{\bf Pos}_S(B , A), t\in T$. Consequently, we have a functor
\begin{center}
{\bf Pos}$_S(-,A):$ {\bf Pos}-$S\to T$-{\bf Pos}
\end{center}
 by taking
$${\bf Pos}_S(-, A)(P)= {\bf Pos}_S(P, A)$$ for every $P\in$ {\bf Pos}-$S$.

An $S$-poset $G_S$ is a generator in the category {\bf Pos}-$S$ if
for any distinct $S$-poset maps $\alpha, \beta: X_S\to Y_S$ there
exists an $S$-poset map $f: G_S\to X_S$ such that $\alpha
f\neq\beta f$.

Given a category $\mathcal{C}$ and an object $B$ of $\mathcal{C}$,
one can construct the {\it slice category} $\mathcal{C}/B$ (read:
$\mathcal{C}$ over $B$): objects of $\mathcal{C}/B$ are morphisms
of $\mathcal{C}$ with codomain $B$, and morphisms in
$\mathcal{C}/B$ from one such object $f: D\to B$
 to another $g: E\to B$ are commutative
triangles in $\mathcal{C}$
$$
\xymatrix{D\ar[rr]^{h}\ar[dr]_{f}& &E\ar[dl]^{g}\\&B&}
$$
i.e, $gh=f$. The composition in $\mathcal{C}/B$ is defined from
the composition in $\mathcal{C},$ in the obvious way. It means
paste triangles side by side.

A poset is said to be {\it complete} if each of its subsets has an
infimum and a supremum, in particular, a complete poset is
bounded, that is, it has the least (bottom) element $\bot$ and the
greatest (top) element $\top$.
%------------------------------------------------------------------------------------
\section{\bf{Some Homological Classifications for\\
 Pomonoids by Generators in {\bf Pos}-$S$}}
%-------------------------------------------------------------------------------------
In this section, we discuss about the generators and projective
generators in {\bf Pos}-$S$ and give some new properties of them.

 As we mentioned in the introduction, generators have already been
 characterized in ~\cite{L} for the category {\bf Pos}-$S$, such as
 the following two first propositions.
\begin{proposition}\label{1}
Cyclic projectives in {\bf Pos}-$S$ are precisely retracts of
$S_S$.
\end{proposition}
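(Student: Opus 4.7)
Since the statement is a biconditional, the plan is to prove both directions.

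For the easy direction ($\Leftarrow$), suppose $A_S$ is a retract of $S_S$ via a retraction $r\colon S_S\to A_S$ and a section $i\colon A_S\to S_S$ with $ri=\id_A$. Then $A_S$ is projective as a retract of the projective object $S_S$ (which is the free $S$-poset on one generator). For cyclicity, since $r$ preserves the $S$-action, $r(s)=r(1)\,s$ for every $s\in S$, hence $A=r(S)=r(1)\,S$, with generator $r(1)$.

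For the hard direction ($\Rightarrow$), suppose $A_S$ is cyclic and projective, with $A=aS$ for some $a\in A$. I would first realize $A_S$ as a retract of a free $S$-poset $F_S=\coprod_{x\in X}(S_S)_x$: since $A_S$ admits a regular epimorphism from such a free object and is projective, there exist $\iota\colon A_S\to F_S$ and $\rho\colon F_S\to A_S$ with $\rho\iota=\id_A$. Next I would localize to a single summand. The element $\iota(a)\in F_S$ lies in exactly one component, say $(S_S)_{x_0}$. Because $\iota$ is an $S$-poset map, $\iota(as)=\iota(a)\,s$ also lies in $(S_S)_{x_0}$ for every $s\in S$, so $\iota(A)\subseteq(S_S)_{x_0}\cong S_S$. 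Composing the corestriction of $\iota$ with the restriction of $\rho$ to $(S_S)_{x_0}$ then yields $\id_A$, exhibiting $A_S$ as a retract of $S_S$.

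The main obstacle is the ``single summand'' step: I would argue carefully that coproducts in \textbf{Pos}-$S$ are realized as disjoint unions compatible with the componentwise $S$-action, so the cyclic image $\iota(aS)$ cannot escape the component containing $\iota(a)$. The remainder of the argument is formal—cyclicity pins $\iota(A)$ to one copy of $S_S$, and projectivity supplies the retraction from the free $S$-poset in the first place.
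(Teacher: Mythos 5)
Your proposal is correct in substance; note that the paper itself gives no proof of this proposition, quoting it from Laan's article, so the comparison below is with the standard argument. For the forward direction the usual route uses cyclicity \emph{first}: if $A_S=aS$, then $\pi\colon S_S\to A_S$, $s\mapsto as$, is already a surjective $S$-poset map, and projectivity applied to $\id_A$ along $\pi$ immediately produces $\iota\colon A_S\to S_S$ with $\pi\iota=\id_A$; no coproduct and no ``single summand'' localization are needed. Your version --- projectivity first (retract of $\coprod_{x\in X}(S_S)_x$), then cyclicity to pin the image into one component --- also works, and the localization step is sound because the components of a coproduct in {\bf Pos}-$S$ are pairwise incomparable sub-$S$-posets, so the cyclic image cannot leave the component of $\iota(a)$. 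The one point you must repair is the phrase ``regular epimorphism''. In {\bf Pos}-$S$ the regular epimorphisms (order-quotient surjections) form a proper subclass of the epimorphisms (which are exactly the surjections), and the canonical map $\coprod_{a\in A}(S_S)_a\to A_S$, $(a,s)\mapsto as$, is surjective but in general \emph{not} a regular epimorphism: if $S$ is trivially ordered and $A$ is not an antichain, the coproduct is discrete while $A$ is not. The notion of projectivity in force here (that of Shi et al., used in Proposition~\ref{cyclic projective}) is projectivity with respect to onto morphisms, so your lifting is legitimate once ``regular epimorphism'' is replaced by ``surjection''; had you insisted on regular-epi-projectivity, the free object covering $A_S$ would have to be $A\times S$ rather than a coproduct of copies of $S_S$, and the single-summand argument would not apply. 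The backward direction is fine as written.
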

\begin{proposition}\label{2}
An $S$-poset $A_S$ is a cyclic projective generator in {\bf
Pos}-$S$ if and only if $A_S\cong eS_S$ for an idempotent $e\in S$
with $e\mathcal{J}1$.
\end{proposition}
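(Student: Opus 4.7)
The plan is to invoke Proposition \ref{1} to identify cyclic projectives with retracts of $S_S$, observe that any such retract is isomorphic to $eS$ for some idempotent $e\in S$, and then to translate the generator condition into the algebraic equation $SeS=S$, which is precisely $e\mathcal{J}1$.

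$(\Leftarrow)$ Suppose $A_S\cong eS$ with $e^2=e$ and $SeS=S$. The inclusion $\iota:eS\hookrightarrow S_S$ together with the monotone right $S$-poset map $p:S_S\to eS$, $s\mapsto es$, satisfies $p\circ\iota=\id_{eS}$, so $eS$ is a retract of $S_S$ and hence cyclic projective by Proposition \ref{1}. For the generator property, given any $\alpha\neq\beta:X\to Y$ in {\bf Pos}-$S$ pick $x\in X$ with $\alpha(x)\neq\beta(x)$. From $1\in SeS$ write $1=ues_0$ with $u,s_0\in S$, put $b:=xue$, and note $be=b$. The assignment $f:eS\to X$, $et\mapsto bt$ is then a well-defined monotone $S$-poset map, and $f(es_0)=bs_0=xues_0=x$, whence $\alpha f\neq\beta f$.

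$(\Rightarrow)$ Let $A_S$ be a cyclic projective generator. Proposition \ref{1} supplies maps $\iota:A\to S_S$ and $p:S_S\to A$ with $p\iota=\id_A$. The composite $\iota p$ is an idempotent endomorphism of $S_S$; since a right $S$-poset map $S_S\to S_S$ is determined by its value at $1$ and equals left multiplication by that value, there exists $e\in S$ with $\iota p=\lambda_e$, whence $e^2=e$ and $A\cong\im(\iota p)=eS$. It remains to deduce $SeS=S$. Observe that any $f:eS\to S$ satisfies $f(e)=f(e\cdot e)=f(e)\cdot e\in Se$, so $\im(f)\subseteq SeS$. If $SeS\subsetneq S$, form the pushout $Y=S\sqcup_{SeS}S$ in {\bf Pos}-$S$ (two copies of $S$ glued along the sub-$S$-poset $SeS$); its two canonical inclusions $\alpha,\beta:S\to Y$ are $S$-poset maps that coincide on $SeS$ but differ elsewhere, so $\alpha\neq\beta$ while $\alpha f=\beta f$ for every $f:eS\to S$, contradicting the generator hypothesis. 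Therefore $SeS=S$, i.e.\ $e\mathcal{J}1$.

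The main obstacle is this last step in $(\Rightarrow)$: one must produce a pair $(\alpha,\beta)$ of $S$-poset maps that witness the failure of $eS$ to be a generator when $SeS\subsetneq S$. The naive Rees quotient $S/SeS$ does not carry a natural partial order compatible with the $S$-action unless $SeS$ is order-convex in $S$, which need not hold. We therefore pass to the pushout $S\sqcup_{SeS}S$, ordered by declaring $[s,i]\le[t,j]$ iff either $i=j$ and $s\le t$, or there exists $z\in SeS$ with $s\le z\le t$; verifying antisymmetry and compatibility with the $S$-action uses only that $SeS$ is a right $S$-subposet of $S$ and is the technical heart of the argument.
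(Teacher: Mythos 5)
Your argument is correct, but it follows a genuinely different route from the one this paper relies on: the paper does not prove Proposition~\ref{2} at all, quoting it from Laan's work, and the proof there (as well as every neighbouring argument in this paper, e.g.\ Proposition~\ref{cyclic poideal} and Lemma~\ref{poideal}) goes through the characterization ``$G_S$ is a generator iff there is an epimorphism, i.e.\ a surjective $S$-poset map, $G_S\to S_S$'' (Theorem 2.1 of the cited paper of Laan). With that lemma in hand the whole proposition is a two-line computation: a map $f:eS\to S_S$ satisfies $f(et)=f(e)et$ with $f(e)=f(e)e$, so it is surjective iff $1\in SeS$, iff $e\mathcal{J}1$. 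Your proof instead re-derives the only-if half of that characterization from the raw definition of generator by building the amalgam $S\sqcup_{SeS}S$; this is the standard trick from the unordered {\bf Act}-$S$ setting, and your explicit order on the amalgam (identifying the two copies along the right poideal-like subact $SeS$ and adding the relations forced through elements of $SeS$) does check out: it is well defined on classes, antisymmetric because $s\le z\le t$ and $t\le z'\le s$ force $s=t=z\in SeS$, and compatible with the action because $SeS$ is a right ideal. Your backward direction (retract $p:s\mapsto es$ for projectivity, and $f(et)=xuet$ with $1=ues_0$ to hit any $x\in X$) is likewise sound. So the proposal buys self-containedness at the cost of the amalgam verification, whereas the paper's route buys brevity at the cost of importing Laan's Theorem 2.1 --- which this paper uses freely elsewhere anyway, so you could shorten your forward direction considerably by invoking it.
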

It is clear that the previous proposition lead to the following
result:
\begin{proposition}
Let $S$ be a commutative pomonoid. Then all cyclic projective
generators in {\bf Pos}-$S$ are isomorphic to $S_S$.
\end{proposition}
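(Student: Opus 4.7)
The plan is to apply Proposition \ref{2} to reduce the problem to showing that, under commutativity, the only idempotent $e\in S$ with $e\mathcal{J}1$ is $e=1$ itself. Once this is done, $eS_S = S_S$ and the conclusion is immediate, since the order on $eS_S$ is just the restriction of the order on $S$.

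First I would invoke Proposition \ref{2}: any cyclic projective generator $A_S$ in \textbf{Pos}-$S$ is isomorphic to $eS_S$ for some idempotent $e\in S$ satisfying $e\mathcal{J}1$. By definition of the Green relation $\mathcal{J}$, this means $SeS = S1S = S$. The commutativity of $S$ collapses the two-sided ideal $SeS$ to the principal right ideal $eS$ (since for any $a,b\in S$ we have $aeb = eab\in eS$), so the condition $e\mathcal{J}1$ becomes $eS = S$.

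Next, I would extract an element $s\in S$ with $es = 1$ and combine this with the idempotency $e^{2}=e$. A one-line computation
\[
e = e\cdot 1 = e(es) = e^{2}s = es = 1
\]
shows $e=1$. Therefore $A_S\cong eS_S = S_S$, finishing the proof.

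There is no real obstacle here: the argument is essentially an unpacking of Proposition \ref{2} together with the observation that, in a commutative pomonoid, the Green relation $\mathcal{J}$ coincides with $\mathcal{R}$, so $e\mathcal{J}1$ forces $e$ to be a right unit, and hence (being idempotent) the identity. The partial order plays no role in the argument beyond being inherited by $eS_S = S_S$.
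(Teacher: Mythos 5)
Your proof is correct and is exactly the argument the paper has in mind: the paper states the result as an immediate consequence of Proposition \ref{2}, and your reduction ($e\mathcal{J}1$ plus commutativity gives $SeS=eS=S$, whence $es=1$ for some $s$ and then $e=e^2s=es=1$) is the standard way to make that "clear" step explicit. No issues.
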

We need  the following proposition from \cite{S}, as a
characterization of cyclic projective $S$-posets.
\begin{proposition}\label{cyclic projective}
Let $A_S$ be an $S$-poset and $a\in A$. Then the following
statement are equivalent:\\
${\rm (i)}~aS_S$ is projective.\\
${\rm (ii)}~aS_S\cong eS_S$ for some idempotent $e\in S.$
\end{proposition}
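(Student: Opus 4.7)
My plan is to invoke Proposition \ref{1} in both directions, so that the proof reduces to translating between ``retract of $S_S$'' and ``isomorphic to $eS_S$ for some idempotent $e\in S$''. For (ii) $\Rightarrow$ (i), assume $aS_S\cong eS_S$ with $e^2=e$. I would exhibit $eS_S$ as a retract of $S_S$ using the inclusion $\iota\colon eS_S\hookrightarrow S_S$ together with $\pi_e\colon S_S\to eS_S$, $s\mapsto es$. The map $\pi_e$ is a morphism in {\bf Pos}-$S$ because the action of $S$ is order-preserving, and $\pi_e\iota(es)=e^2s=es$, so $\pi_e\iota=\id_{eS}$. By Proposition \ref{1}, $eS_S$---and hence the isomorphic $aS_S$---is cyclic projective.

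For the opposite direction (i) $\Rightarrow$ (ii), since $aS_S$ is cyclic and projective, Proposition \ref{1} supplies $S$-poset maps $\sigma\colon aS\to S$ and $\pi\colon S\to aS$ with $\pi\sigma=\id_{aS}$. Writing $\pi(1)=at\in aS$ and $u:=\sigma(a)\in S$, the two maps are determined by the explicit formulas $\pi(s)=ats$ and $\sigma(as)=us$, and the identity $\pi\sigma(a)=a$ forces the relation $atu=a$. Applying $\sigma$ to the identity $a=a\cdot tu$ then yields $u=\sigma(a)\cdot tu=utu$. I would then set $e:=ut$; idempotency is immediate, since $e^2=utut=(utu)t=ut=e$.

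To finish, I would verify that $\sigma$ corestricts to an isomorphism $aS_S\cong eS_S$ in {\bf Pos}-$S$. The image $\sigma(aS)=uS$ coincides with $eS=utS$: indeed $utS\subseteq uS$ is immediate, while $u=utu\in utS$ gives the reverse inclusion $uS\subseteq utS$. Hence $\sigma\colon aS\to eS$ is a bijective monotone $S$-poset map, and its two-sided inverse is the restriction $\pi|_{eS}$, which is monotone (as a restriction of $\pi$) and which sends $us\mapsto \pi(us)=atus=as$. This delivers the required isomorphism.

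The main obstacle is extracting the idempotent in (i) $\Rightarrow$ (ii): the naive candidate $\sigma(a)=u$ only satisfies the weaker relation $u=utu$, not $u^2=u$, so one must twist by the parameter $t$ recording $\pi$ and take $e=ut$ instead. A secondary subtlety, absent in the analogous statement for ordinary $S$-acts, is that the bijection $aS\to eS$ has to be upgraded to an isomorphism in {\bf Pos}-$S$, meaning both $\sigma$ and its inverse need to be order-preserving; this is automatic here because each is built as a (co)restriction of one of the given morphisms $\sigma,\pi$ of {\bf Pos}-$S$.
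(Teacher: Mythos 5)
Your proof is correct. There is nothing in the paper to compare it against: the proposition is imported from \cite{S} without proof, and your argument is the standard one, namely the order-theoretic analogue of the corresponding $S$-act statement in \cite{K.K.M}, routed through Proposition \ref{1} exactly as the paper's surrounding text suggests. Both directions check out: in (ii)$\Rightarrow$(i) the retraction $s\mapsto es$ is a morphism of {\bf Pos}-$S$ because multiplication in a pomonoid is order-preserving, so $eS_S$ is a retract of $S_S$; in (i)$\Rightarrow$(ii) the relations $atu=a$, $u=utu$, the idempotency of $e=ut$, and the equality $\sigma(aS)=uS=utS=eS$ all hold as you state, and you correctly address the one point where the poset setting demands more than the act setting, namely that the inverse bijection $\pi|_{eS}\colon us\mapsto as$ is monotone because it is a restriction of the given morphism $\pi$, so the corestriction of $\sigma$ is genuinely an isomorphism in {\bf Pos}-$S$ and not merely an equivariant bijection.
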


In~\cite{S} the authors found a decomposition theorem for
projective $S$-posets. In the following, by this fact and
Proposition \ref{2}, we generalize a description of projective
generator $S$-acts to $S$-posets. The proof is almost identical to
that for Proposition 3.18.5 from \cite{K.K.M}.

\begin{theorem}\label{pogenerator}
Every $S$-poset $P_S$ is projective generator if and only if
$P_S=\coprod_{i\in I}P_i$ where $P_i\cong e_iS$ for every $i\in
I$, and at least one $P_j$, $j\in I$ is a generator with
$e_j\mathcal{J} 1$.
\end{theorem}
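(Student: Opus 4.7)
My strategy is to combine the decomposition theorem for projective $S$-posets of \cite{S} (every projective $P$ is isomorphic to $\coprod_i e_iS$ for idempotents $e_i$) with Proposition \ref{2} (which identifies the cyclic projective generators as the $eS$ with $e\mathcal{J}1$), exactly in parallel with the $S$-act argument of Proposition 3.18.5 of \cite{K.K.M}. The theorem then reduces to a localization argument on the forward direction and an extension argument on the backward direction.

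For $(\Rightarrow)$ I would apply the decomposition theorem from \cite{S} to obtain $P = \coprod_{i\in I} P_i$ with $P_i \cong e_iS$. The remaining task is to locate the distinguished generator summand. For this I intend to use the fact that $G_S$ is a generator in {\bf Pos}-$S$ iff there exists an $S$-poset map $f: G \to S_S$ with $1\in\im f$; this follows because any epimorphism $G^{(I)}\to S_S$ must map some element to $1$, and that element lies in a single copy of $G$, so $f$ restricted to that copy already hits $1$. Applying this to $P$, the preimage of $1$ lies in a unique summand $P_j$; then $f|_{P_j}: e_jS\to S$ still hits $1$, so $P_j$ is itself a cyclic projective generator, and Proposition \ref{2} then forces $e_j\mathcal{J}1$.

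For $(\Leftarrow)$, assume $P = \coprod_i e_iS$ with $e_j\mathcal{J}1$ for some fixed $j$. Projectivity is immediate: each $e_iS$ is a retract of $S_S$ by Proposition \ref{1}, hence projective, and coproducts of projectives are projective by the universal property of the coproduct (lift on each summand and assemble). For the generator property, Proposition \ref{2} provides an $S$-poset map $g: e_jS \to S$ with $1\in\im g$. I extend $g$ to $\tilde g: P \to S$ by choosing on each summand with $i\neq j$ the inclusion $e_iS\hookrightarrow S_S$ (which is a genuine $S$-poset map since $e_iS$ is a sub-$S$-poset of $S$), and assembling via the coproduct's universal property. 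Since $1\in\im\tilde g$, this exhibits $P$ as a generator.

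The main obstacle is not algebraic but lies in securing the generator characterization via a single map to $S_S$ in the ordered setting, since epimorphisms in {\bf Pos}-$S$ need not coincide with surjections; once this is pinned down (via Laan's treatment in \cite{L} or a direct verification at the element $1\in S$), the coproduct decomposition and Proposition \ref{2} do the remaining work mechanically, and the argument is a direct translation of the classical $S$-act proof.
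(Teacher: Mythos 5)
Your proposal is correct and follows exactly the route the paper intends: the paper omits the proof, noting only that it combines the decomposition theorem of \cite{S} with Proposition \ref{2} and is ``almost identical'' to Proposition 3.18.5 of \cite{K.K.M}, and your argument is precisely that translation, using Laan's criterion that a generator admits a surjection onto $S_S$ (equivalently, a map hitting $1$) to isolate the distinguished summand. No gaps of substance; at most you might note explicitly that $e_jS\cong e'S$ with $e'\mathcal{J}1$ yields $e_j\mathcal{J}1$ (or simply that the idempotents in the decomposition are only determined up to such an isomorphism).
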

Notice that for every pomonoid $S$ and an idempotent $e\in S$, the
sub $S$-poset $eS_S$ of $S_S$ is projective according to
Proposition \ref{cyclic projective}, but it is not a generator
because $e\mathcal{J}1$ is not necessary true.

Next, by Theorem~\ref{pogenerator} we get the following result.
\begin{theorem}\label{7}
For any pomonoid $S$ the following statements are equivalent:\\
{\rm (i)} All projective right $S$-posets are generators in {\bf
Pos}-$S$.\\
{\rm (ii)} All cyclic projective  right $S$-posets are generators
in {\bf Pos}-$S$.\\
{\rm (iii)} $e\mathcal{J} 1$ for every idempotent $e\in S$.
\end{theorem}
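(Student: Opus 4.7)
My plan is to prove the equivalences by the cycle (i)$\Rightarrow$(ii)$\Rightarrow$(iii)$\Rightarrow$(i). The implication (i)$\Rightarrow$(ii) is immediate since cyclic projectives are in particular projective.

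For (ii)$\Rightarrow$(iii), I would fix an arbitrary idempotent $e\in S$. By Proposition~\ref{cyclic projective}, $eS_S$ is (cyclic) projective, and (ii) then forces it to be a generator. Proposition~\ref{2} accordingly supplies an idempotent $f\in S$ with $eS_S\cong fS_S$ and $f\mathcal{J}1$. It remains to transfer the $\mathcal{J}$-class from $f$ back to $e$: given an $S$-poset isomorphism $\phi:eS\to fS$, the identity $\phi(e)=\phi(e^2)=\phi(e)e$ places $\phi(e)$ in $fS\cap Se$, and symmetrically $\phi^{-1}(f)\in eS\cap Sf$. Writing $\phi^{-1}(f)=eb$ and $\phi(e)=s'e$, one computes $f=\phi(eb)=\phi(e)b=s'eb\in SeS$, and symmetrically $e\in SfS$, so $SeS=SfS$; i.e., $e\mathcal{J}f$. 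Combined with $f\mathcal{J}1$ and transitivity of $\mathcal{J}$, this yields $e\mathcal{J}1$. This bookkeeping is the main obstacle, as Proposition~\ref{2} only asserts isomorphism to \emph{some} $fS$ with $f\mathcal{J}1$; bridging to $e$ itself reduces to the classical monoid-theoretic fact that isomorphic principal right ideals generated by idempotents have $\mathcal{J}$-equivalent generators (valid in {\bf Pos}-$S$ since the argument uses only the underlying $S$-act structure).

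For (iii)$\Rightarrow$(i), let $P_S$ be any projective $S$-poset. By the decomposition theorem of \cite{S} recalled just before Theorem~\ref{pogenerator}, we may write $P_S=\coprod_{i\in I}e_iS$ with each $e_i$ idempotent, with $I$ nonempty since $P_S$ is nonempty. Hypothesis (iii) gives $e_i\mathcal{J}1$ for every $i\in I$, so Proposition~\ref{2} identifies each summand $e_iS$ as a cyclic projective generator. Choosing any $j\in I$, Theorem~\ref{pogenerator} then delivers that $P_S$ is a projective generator, completing the cycle.
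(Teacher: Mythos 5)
Your proof is correct and takes essentially the route the paper intends: the paper derives this theorem directly from Theorem~\ref{pogenerator} together with Proposition~\ref{2} without writing out the details. The only substantive addition in your write-up is the careful transfer of the $\mathcal{J}$-class from $f$ back to $e$ via the isomorphism $eS\cong fS$ (the classical fact that isomorphic idempotent-generated principal right ideals have $\mathcal{D}$-, hence $\mathcal{J}$-, equivalent generators), a step the paper leaves implicit; your verification of it is sound.
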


Recall~\cite{E.M.R} that a left {\it poideal} of a pomonoid $S$ is
a (possibly empty) subset $I$ of $S$ if it is both a monoid left
ideal $(SI\subseteq I )$ and a down set $(a\leq b, b \in I$ imply
$a\in I)$. For example
$$\downarrow eS=\{t\in S~:~\exists s\in S,~t\leq es\}$$
is a cyclic poideal of $S$, for every idempotent $e\in S.$

In the following we shall characterize pomonoids that all their
principal right poideals are generators.
\begin{proposition}\label{cyclic poideal}
Let $S$ be a pomonoid  and $e\in S$ with $e^2=e$. If the cyclic
projective sub $S$-poset $eS_S$ of $S_S$ is a generator in {\bf
Pos}-$S$, then $\downarrow eS$ is also a generator.
\end{proposition}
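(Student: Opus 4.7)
The plan is to exhibit $eS$ as a retract of $\downarrow eS$ in {\bf Pos}-$S$ and then invoke the standard observation that any $S$-poset containing a generator as a retract is itself a generator.

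First I would note that since $es\leq es$ for every $s\in S$, we have $eS\subseteq\downarrow eS$, giving an inclusion $\iota: eS\hookrightarrow\downarrow eS$ in {\bf Pos}-$S$. Next I would introduce the restriction to $\downarrow eS$ of the left-multiplication map $\lambda_e:S\to S$ from preliminary (ii); call this restriction $\pi:\downarrow eS\to S$, $t\mapsto et$. Since $et=e\cdot t\in eS$, the map $\pi$ actually lands in $eS$, and it is order-preserving and $S$-equivariant because $\lambda_e$ is, so $\pi$ is a morphism in {\bf Pos}-$S$. The idempotency $e^2=e$ then gives $\pi\circ\iota=\id_{eS}$, since $\pi(eu)=e(eu)=e^2u=eu$. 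This exhibits $eS$ as a retract of $\downarrow eS$.

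To finish, I would take any pair of distinct morphisms $\alpha,\beta:X_S\to Y_S$ in {\bf Pos}-$S$. Because $eS$ is a generator, there is some $f:eS\to X_S$ with $\alpha f\neq\beta f$. Setting $g:=f\circ\pi:\downarrow eS\to X_S$, one has $g\circ\iota=f$, whence
$$(\alpha g)\circ\iota=\alpha f\neq\beta f=(\beta g)\circ\iota,$$
which forces $\alpha g\neq\beta g$. Thus $\downarrow eS$ is a generator in {\bf Pos}-$S$.

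There is no substantive obstacle here: the whole content lies in producing the retraction $\pi$, which rests only on the idempotency $e^{2}=e$ and the pomonoid compatibility of the partial order (so that left multiplication by $e$ is monotone). Once the retract structure is in place, the transfer of the generator property is purely formal.
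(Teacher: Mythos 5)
Your proof is correct. The construction at its heart --- the map $t\mapsto et$ on $\downarrow eS$, whose good behaviour rests exactly on $e^2=e$ and the compatibility of the order with multiplication --- is also the engine of the paper's proof, but the two arguments are packaged differently. You observe that this map is a retraction $\pi:\downarrow eS\to eS$ splitting the inclusion, and then transfer the generator property across the retract directly from the definition (equivalently: $\pi$ is a split epimorphism onto a generator, so its domain is a generator). The paper instead invokes Laan's characterization of generators in {\bf Pos}-$S$ as those $S$-posets admitting an epimorphism onto $S_S$: it takes the epimorphism $f:eS_S\to S_S$ witnessing that $eS_S$ is a generator and checks that $g(x):=f(ex)$ --- which is precisely your $f\circ\pi$ --- is again an epimorphism onto $S_S$. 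Your version is marginally more self-contained, since it needs no external characterization of generators, and it isolates the cleaner structural fact that $eS_S$ is a retract of $\downarrow eS$; the paper's version fits more naturally into its running use of the ``epimorphism onto $S_S$'' criterion. Both are complete and correct.
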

\begin{proof}
By assumption there exists an $S$-poset epimorphism (is exactly
onto in {\bf Pos}-$S$) $f: eS_S\rightarrow S_S$. Define mapping
$g: \downarrow eS\to S_S$ by $g(x):=f(ex)$ for every $x\in
\downarrow eS$. It is easy to see that $g$ is an $S$-poset map.
Also, for every $s\in S$ there exists $u\in S$ such that
$f(eu)=s$. Then we have
$$g(eu)=f(eeu)=f(eu)=s$$
This means that $g$ is an epimorphism. By Theorem 2.1 in
\cite{L}, we conclude $\downarrow eS$ is a generator, as we
required.
\end{proof}

\begin{lemma}\label{poideal}
Let $S$ be a pomonoid and $z\in S$. If the principal right poideal
$\downarrow zS$ is a generator in {\bf Pos}-$S,$ then there exist $x,
y\in S$ such that $1\leq yx$, and $za\leq zb$, $a, b\in S$ implies
$ya\leq yb$.
\end{lemma}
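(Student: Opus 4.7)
The plan is to unpack what it means for $\downarrow zS$ to be a generator via Theorem 2.1 of \cite{L} (used already in the proof of Proposition \ref{cyclic poideal}), namely that there exists a surjective $S$-poset map $f:\downarrow zS\to S_S$. Then both conclusions should fall out by a single pullback of the identity element along $f$, using monotonicity and $S$-equivariance of $f$.

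First I would fix an element $w\in \downarrow zS$ with $f(w)=1$, which exists by surjectivity. By definition of $\downarrow zS$, there is some $x\in S$ with $w\le zx$. Note also that $z=z\cdot 1\in zS\subseteq \downarrow zS$, so $f(z)$ makes sense; set $y:=f(z)\in S$. Applying the monotone, action-preserving map $f$ to $w\le zx$ gives
$$1=f(w)\le f(zx)=f(z)x=yx,$$
which is the first required inequality.

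For the second part, suppose $za\le zb$ with $a,b\in S$. Both $za$ and $zb$ lie in $\downarrow zS$, so monotonicity and $S$-equivariance of $f$ yield
$$ya=f(z)a=f(za)\le f(zb)=f(z)b=yb,$$
as desired.

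I do not expect a genuine obstacle here; the only subtle point is verifying that $z$ itself (and hence $za,zb$) really belongs to $\downarrow zS$, so that $f(z)$ is defined and equivariance gives $f(za)=f(z)a$. That comes for free from $1\in S$ together with reflexivity of the order. The entire argument is thus: extract a surjection from the generator hypothesis, set $y=f(z)$, and transfer the two order relations $w\le zx$ and $za\le zb$ across $f$.
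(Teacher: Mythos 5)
Your proposal is correct and follows essentially the same route as the paper: both extract a surjective $S$-poset map $\downarrow zS\to S_S$ from the generator hypothesis via Theorem 2.1 of \cite{L}, pull back $1$ to some $w\le zx$, set $y$ to be the image of $z$, and transfer the two inequalities across the map using monotonicity and equivariance. Your extra remark that $z\in\downarrow zS$ (so that $f(z)$ is defined) is a small clarification the paper leaves implicit.
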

\begin{proof}
Since $\downarrow zS$ is a generator in {\bf Pos}-$S$, by Theorem
2.1 of \cite{L}, there exists an epimorphism $g : \downarrow
zS\rightarrow S_S$. Hence, there are $u\in \downarrow zS$ and
$t\in S$ such that $u\leq zt$ and $g(u)=1$. Let $y=g(z)$ and
$x=t$. Then $yx=g(z)x=g(zx)$. Since $u\leq zx$, the monotone
property of $g$ implies $g(u)\leq g(zx)$. Consequently,
$1=g(u)\leq g(zx)=yx$. Now, suppose $za\leq zb$, $a, b\in S$. Then
$$ya=g(z)a=g(za)\leq g(zb)=g(z)b=yb.$$
\end{proof}
\begin{proposition}\label{3}
Let $S$ be a pomonoid in which the identity element is the top
element. If all poideals of $S$ are generators then the sub
$S$-poset $eS_S$ of $S_S$ is a generator in  {\bf Pos}-$S$, for
every idempotent $e\in S$.
\end{proposition}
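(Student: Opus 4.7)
The plan is to produce an explicit $S$-poset epimorphism $\phi\colon eS_S\to S_S$; by Theorem~2.1 of \cite{L} (which characterizes generators in {\bf Pos}-$S$ as those $S$-posets admitting an epimorphism onto $S_S$, and was already invoked in Proposition~\ref{cyclic poideal} and Lemma~\ref{poideal}), this suffices to conclude that $eS_S$ is a generator.

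First I would use the hypothesis that every poideal of $S$ is a generator applied to the principal poideal $\downarrow\! eS$ (which was identified as a poideal in the preamble to Proposition~\ref{cyclic poideal}). Lemma~\ref{poideal}, with $z=e$, then yields elements $x,y\in S$ satisfying $1\le yx$ and the implication $ea\le eb\Rightarrow ya\le yb$ for all $a,b\in S$. Now I invoke the standing assumption that $1$ is the top element of $S$: this upgrades $1\le yx$ to the equality $yx=1$, which is exactly the ingredient needed for surjectivity below.

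Next I would define $\phi\colon eS_S\to S_S$ by $\phi(es)=ys$. The key verification is well-definedness and monotonicity: if $es\le es'$ (resp.\ $es=es'$), the implication from Lemma~\ref{poideal} (applied in one, resp.\ both, directions) gives $ys\le ys'$ (resp.\ $ys=ys'$). The $S$-equivariance $\phi((es)t)=\phi(est)=yst=\phi(es)t$ is immediate. For surjectivity, given any $r\in S$, the element $e(xr)\in eS$ satisfies $\phi(e(xr))=yxr=1\cdot r=r$, using $yx=1$.

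The only real obstacle is the well-definedness of $\phi$: a priori the rule $es\mapsto ys$ could fail to respect the equivalence $es=es'$, and this is precisely where Lemma~\ref{poideal} is needed. Everything else is routine, and the top-element hypothesis enters exactly once, to turn the inequality $1\le yx$ produced by the lemma into the equality required to hit every element of $S$ via $\phi$.
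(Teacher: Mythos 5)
Your proof is correct. It starts exactly as the paper does --- apply the hypothesis to the poideal $\downarrow eS$ and invoke Lemma~\ref{poideal} with $z=e$ to obtain $x,y$ with $1\le yx$ and $ea\le eb\Rightarrow ya\le yb$ --- but the endgame is genuinely different. The paper stays inside $S$: from $e1=ee$ it gets $y\le ye$, hence $1\le yx\le yex\le 1$ (the last inequality from the top-element hypothesis), so $yex=1$; this puts $1$ in $SeS$, i.e.\ $e\mathcal{J}1$, and Proposition~\ref{2} then says $eS_S$ is a (cyclic projective) generator. You instead use the top-element hypothesis only to get $yx=1$ and then build the epimorphism $\phi\colon eS_S\to S_S$, $es\mapsto ys$, by hand, with well-definedness and monotonicity supplied by the implication from Lemma~\ref{poideal} and surjectivity by $\phi(exr)=yxr=r$; Laan's Theorem~2.1 finishes. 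Both arguments are valid and of comparable length. The paper's route yields the slightly sharper conclusion that $e\mathcal{J}1$ (so $eS_S$ is a projective generator in the sense of Proposition~\ref{2}), and is the version reused in the subsequent theorem's implication ${\rm (iv)}\Rightarrow{\rm (iii)}$; your route is more self-contained, bypassing Proposition~\ref{2} entirely and exhibiting the epimorphism explicitly rather than extracting it from the Green's-relation characterization.
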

\begin{proof}
By hypothesis, for every idempotent $e\in S$, $\downarrow eS$ is a
generator in {\bf Pos}-$S$. Then by Lemma \ref{poideal}, there
exist $x, y\in S$ such that $1\leq yx$ and $ea\leq eb$, $a, b\in
S$, always implies $ya\leq yb$. In particular, since $e1\leq ee$
then $y\leq ye$. Now, $1\leq yx\leq yex$. Also, evidently, we have
$yex\leq 1$. Consequently, $yex=1$ this means that
$e\mathcal{J}1,$ equivalently $eS_S$ is a projective generator by
Proposition \ref{2}, as we needed.
\end{proof}

\begin{theorem}
Let $S$ be a pomonoid in which the identity element is the top element. The
following statements are equivalent.\\
{\rm (i)} All projective right $S$-posets are generators in {\bf
Pos}-$S$.\\
{\rm (ii)} All  cyclic projective right $S$-posets are generators
in {\bf Pos}-$S$.\\
{\rm (iii)} $e\mathcal{J} 1$ for every idempotent $e\in S$.\\
{\rm (iv)} All principal right poideals of $S$ which are generated
by an idempotent, are generators in {\bf Pos}-$S$.
\end{theorem}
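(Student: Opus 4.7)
The plan is to note that the equivalence of (i), (ii), and (iii) is already Theorem \ref{7}, and to fit (iv) into the cycle by proving (ii) $\Rightarrow$ (iv) $\Rightarrow$ (iii). The standing hypothesis that $1$ is the top element of $S$ will be needed only in the second of these two new implications.

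For (ii) $\Rightarrow$ (iv), I would pick any idempotent $e\in S$. Proposition \ref{cyclic projective} shows that $eS_S$ is cyclic projective, so by (ii) it is a generator in {\bf Pos}-$S$. Proposition \ref{cyclic poideal} then upgrades this to the statement that the principal right poideal $\downarrow eS$ is a generator, which is exactly (iv).

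For (iv) $\Rightarrow$ (iii), I would fix an idempotent $e\in S$ and apply Lemma \ref{poideal} to $z=e$, obtaining $x,y\in S$ with $1\leq yx$ and the implication $ea\leq eb \Rightarrow ya\leq yb$. Since $e=e^2$ gives $e\cdot 1 \leq e\cdot e$, the implication yields $y\leq ye$, hence $1 \leq yx \leq yex$. Now the assumption that $1$ is the top element of $S$ forces $yex\leq 1$, whence $yex=1$ and therefore $1\in SeS$, so $SeS=S=S1S$, i.e. $e\mathcal{J}1$. This is (iii), and combined with Theorem \ref{7} closes the cycle.

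The main obstacle is precisely the step $yex=1$: without the hypothesis that $1$ is the top element there is no reason $yex$ should not exceed $1$, so one only gets $1\leq yex$ and $e\mathcal{J}1$ cannot be concluded. This is the single place where the extra hypothesis of the theorem enters, and the rest of the argument is only a packaging of Proposition \ref{cyclic projective}, Proposition \ref{cyclic poideal}, Lemma \ref{poideal}, and Theorem \ref{7}.
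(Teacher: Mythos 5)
Your proposal is correct and follows essentially the same route as the paper: the first three conditions are handled by citing Theorem \ref{7}, the implication into (iv) goes through Proposition \ref{cyclic poideal}, and (iv) $\Rightarrow$ (iii) reruns the argument of Proposition \ref{3} (via Lemma \ref{poideal} with $z=e$, using that $1$ is the top element to force $yex=1$). The only cosmetic difference is that you prove (ii) $\Rightarrow$ (iv) where the paper proves (iii) $\Rightarrow$ (iv), which is immaterial since those conditions are already known to be equivalent.
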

\begin{proof}
${\rm (i)}\Longleftrightarrow{\rm (ii)}\Longleftrightarrow{\rm
(iii)}$: It is Theorem~\ref{7}.\\
${\rm (iii)}\Longrightarrow {\rm (iv)}$. By Proposition~\ref{2} we
get $eS_S$ is a cyclic projective generator. Next,
Proposition~\ref{cyclic poideal}
shows that $\downarrow eS$ is a generator in {\bf Pos}-$S$.\\
${\rm (iv)}\Longrightarrow{\rm (iii)}$. Consider the principal
right poideal $\downarrow eS$ for every idempotent $e\in S$. By a
similar proof of Proposition \ref{3}, the cyclic projective sub
$S$-poset $eS_S$ of $S_S$ is a generator. Using Proposition
\ref{2}, we conclude that $e\mathcal{J}1$.
\end{proof}
By a {\it free $S$-poset on a poset $P$}~\cite{F.M} we mean an
$S$-poset $F$ together with a poset map $\tau: P\rightarrow F$
with the universal property that given any $S$-poset $A$ and a
poset map $f: P\rightarrow A$ there exists a unique $S$-poset map
$\bar{f}: F\rightarrow A$ such that $\bar{f}\circ\tau=f,$ i.e, the
diagram
$$\xymatrix{P\ar[r]^{\tau} \ar[dr]_{f}&F\ar[d]^{\bar{f}}\\ & A}$$
commutes. Also $S$-poset $F$ is given by $F= P\times S$ with
componentwise order and action $(x, s)t=(x, st)$, for $x\in P$ and
$s, t\in S$ (see \cite{F.M}).
\begin{example}
{\rm Let $S$ be a pomonoid generated by the elements $e, k,
k^\prime$ and with discrete order such that $kk^\prime=1$ and
$ek=k^\prime$. Then $eS_S$ is a cyclic projective generator in
{\bf Pos}-$S$. But $eS_S$ is not free (see Lemma \ref{free}
below).}
\end{example}
In the following lemma we shall characterize idempotents of monoid
$S$ which generate free cyclic sub $S$-acts (see Proposition
3.17.17 of \cite{K.K.M}) of $S_S$ generalizes to the category of
$S$-posets. Moreover, we conclude when projective (or cyclic
projective) implies free in {\bf Pos}-$S$.

\begin{lemma}\label{free}
Let $e$ be an idempotent of a pomonoid $S$. Then the sub $S$-poset
$eS_S$ of $S_S$ is a free right $S$-poset if and only if
$e\mathcal{D}1$.
\end{lemma}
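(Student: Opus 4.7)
The plan is to show that $eS_S$ is free if and only if $eS_S\cong S_S$ in {\bf Pos}-$S$, and then to rephrase this isomorphism condition as $e\,\mathcal{D}\,1$. The bridge is cyclicity of $eS_S$ on one side and the defining inverses in the $\mathcal{D}$-relation on the other.

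For the forward direction, suppose $eS_S$ is free on some poset $P$; then by the description of free $S$-posets recalled above, there is an iso $\psi\colon eS_S\to P\times S$ in {\bf Pos}-$S$. Writing $\psi(e)=(p_0,s_0)$, cyclicity of $eS$ together with $\psi$ being an $S$-map gives $\psi(eS)=(p_0,s_0)\cdot S\subseteq\{p_0\}\times S$; surjectivity of $\psi$ then forces $P=\{p_0\}$. Since $(p_0,s)\mapsto s$ is an order iso $\{p_0\}\times S\cong S_S$, composition yields an iso $\phi\colon S_S\to eS_S$. Set $u:=\phi(1)\in eS$, so $\phi(s)=us$; surjectivity gives $uS=eS$. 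Writing $u=ex$ and using $e^2=e$ forces $eu=u$, and with $z:=\phi^{-1}(e)$ one computes
\[ \phi(zu)=u(zu)=(uz)u=eu=u=\phi(1). \]
Injectivity of $\phi$ then yields $zu=1$, so $u$ witnesses $e\,\mathcal{D}\,1$: $eS=uS$ and $Su=S$.

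For the converse, assume $e\,\mathcal{D}\,1$ with witness $u\in S$; then $eS=uS$ and $vu=1$ for some $v\in S$. Define $\phi\colon S_S\to eS_S$ by $\phi(s)=us$. This is a well-defined $S$-map, monotone because $S$ is a pomonoid, and surjective by $uS=eS$. If $us\leq us'$ then left-multiplying by $v$ gives $s\leq s'$, and the same trick handles injectivity. Thus $\phi$ is an iso in {\bf Pos}-$S$, and since $S_S$ is free, so is $eS_S$. The only real subtlety is the reduction $P=\{p_0\}$ in the forward direction; once that is in hand, the rest is a routine chase through the $\mathcal{D}$-relation, with the one-sided inverse $v$ of $u$ (respectively $z$ of $u$) delivering both injectivity and order-reflection.
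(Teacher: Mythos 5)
Your proof is correct, and it is exactly the intended argument: the paper states this lemma without proof as the $S$-poset version of Proposition 3.17.17 of \cite{K.K.M}, and your reduction (free cyclic $\Rightarrow$ $P$ is a singleton $\Rightarrow$ $eS_S\cong S_S$, then extracting $u$ with $uS=eS$ and $Su=S$, and conversely) is the standard $S$-act proof with the order-theoretic details correctly supplied. In particular the check that $s\mapsto us$ reflects order via left multiplication by $v$ is valid precisely because the pomonoid order is two-sided compatible.
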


\begin{theorem}
For any pomonoid $S$ the following statements are equivalent:\\
{\rm (i)} All projective right $S$-posets are free.\\
{\rm (ii)} All projective generators in {\bf Pos}-$S$ are free.\\
{\rm (iii)} All cyclic projective right S-posets are free.\\
${\rm (iv)}~e\mathcal{D}1$ for every idempotent $e\in S$.
\end{theorem}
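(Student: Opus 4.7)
My plan is to close the cycle $(i) \Rightarrow (ii) \Rightarrow (iii) \Rightarrow (iv) \Rightarrow (i)$. The implication $(i) \Rightarrow (ii)$ is immediate from the definitions, and $(iii) \Rightarrow (iv)$ is nearly as short: for any idempotent $e \in S$, Proposition \ref{cyclic projective} supplies the cyclic projective $eS_S$, which is free by (iii), whence $e\mathcal{D}1$ by Lemma \ref{free}.

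For $(iv) \Rightarrow (i)$, I would take an arbitrary projective $S$-poset $P_S$ and invoke the decomposition of projective $S$-posets from \cite{S} to write $P_S \cong \coprod_{i \in I} e_iS$ with each $e_i$ an idempotent. By (iv) and Lemma \ref{free}, each $e_iS$ is free; cyclicity forces the underlying base poset of a free $S$-poset to be a singleton, so in fact $e_iS \cong S_S$. Hence $P_S \cong \coprod_{i \in I} S_S \cong I \times S$ where $I$ carries the discrete order, exhibiting $P_S$ as the free $S$-poset on $I$.

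The substantive step is $(ii) \Rightarrow (iii)$. Given a cyclic projective $aS$, Proposition \ref{cyclic projective} gives $aS \cong eS$ for an idempotent $e$. I would form $Q = eS_S \amalg S_S$; since $1\mathcal{J}1$ trivially, Theorem \ref{pogenerator} identifies $Q$ as a projective generator, so by hypothesis (ii) it is free: $Q \cong R \times S$ for some poset $R$. The crux is then to deduce freeness of the single summand $eS_S$. Both $eS_S$ and $S_S$ are cyclic, hence indecomposable in {\bf Pos}-$S$, while $R \times S$ decomposes canonically as $\coprod_\alpha (R_\alpha \times S)$ over the connected components $R_\alpha$ of $R$, each $R_\alpha \times S$ being itself indecomposable. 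Appealing to the uniqueness of the decomposition of an $S$-poset into indecomposable summands (a Krull--Schmidt-type fact arising from the unique finest partition of any $S$-poset into orbit- and order-connected blocks) matches the two decompositions of $Q$ termwise, yielding $eS_S \cong R_\alpha \times S$ for some $\alpha$; cyclicity of $eS_S$ then forces $R_\alpha$ to be a singleton, giving $eS_S \cong S_S$, which is free. This final uniqueness-of-decomposition step, rather than the cycle itself, is where I anticipate the main difficulty.
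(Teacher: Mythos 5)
Your proof is correct and follows essentially the same route as the paper: the trivial implication (i)$\Rightarrow$(ii), the coproduct $eS_S\amalg S_S$ together with Theorem~\ref{pogenerator} for (ii)$\Rightarrow$(iii), the decomposition theorem of \cite{S} for deducing (i), and Proposition~\ref{cyclic projective} with Lemma~\ref{free} for the link to (iv). The only difference is that where the paper simply asserts that freeness of $eS_S\amalg S_S$ implies freeness of the summand $eS_S$, you supply the justification via uniqueness of the decomposition into indecomposable components --- a detail the paper leaves implicit, and which your argument handles correctly.
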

\begin{proof}
${\rm (i)}\Longrightarrow {\rm (ii)}$ is trivial.\\
${\rm (ii)}\Longrightarrow {\rm (iii)}$. By Proposition
\ref{cyclic projective}, all cyclic projective $S$-posets are
isomorphism to $eS_S$ for some idempotent $e\in S$. Let
$A=S_S\coprod eS_S$. By Proposition \ref{pogenerator}, $A_S$ is a
projective generator in {\bf Pos}-$S$. By hypothesis
$A_S$ is free which implies that $eS_S$ is free.\\
${\rm (iii)}\Longrightarrow {\rm (i)}$.  By decomposition theorem
in \cite{S}, every projective $S$-poset is isomorphic to a
coproduct of cyclic
projective $S$-posets which are free by assumption. Now we get the result.\\
${\rm (iii)}\Longleftrightarrow {\rm (iv)}$. By characterization
of cyclic
projective $S$-posets in Proposition \ref{cyclic projective} and Lemma \ref{free} we get the equivalence.\\
\end{proof}
%-----------------------------------------------------------------------------------------------
\section{\bf{Regular Injectivity in {\bf Pos}-$S$ and {\bf Pos}-$S/B_S$ and Generators}}
%-----------------------------------------------------------------------------------------------%
Let $\mathcal{C}$ be a category and $\mathcal{M}$ a class of its
morphisms. An object $I$ of  $\mathcal{C}$ is called
$\mathcal{M}$-injective if for each $\mathcal{M}$-morphism $h
:U\to V$ and morphism $u : U\to I$ there exists a morphism $s:
V\to I$ such that $sh = u$. That is, the following diagram is
commutative:
$$\xymatrix{U\ar[d]_{h}\ar[r]^{u} &I\\\ar@{-->}[ur]_{s} V &}$$
In particular, this means that, in the slice category
$\mathcal{C}/B$, $f : X\to B$ is $\mathcal{M}$-injective if, for
any commutative diagram
$$
\xymatrix{U\ar[r]^{u}\ar[d]_{h} &X\ar[d]^f\\V\ar[r]_{v}&B}
$$
with $h\in \mathcal{M}$, there exists an arrow $s: V\to X$ such
that $sh=u$ and $fs=\upsilon$.
$$
\xymatrix{U\ar[r]^{u}\ar[d]_{h}
&X\ar[d]^f\\V\ar@{-->}[ur]_{s}\ar[r]_{v}&B}
$$
Recall that regular monomorphisms (morphisms which are equalizers)
in {\bf Pos}-$S$ (and also in {\bf Pos}-$S/B_S$) are exactly
order-embeddings (see \cite{F.M} and (\cite{FF.M})). In the
following we consider Emb-injectivity in {\bf Pos}-$S$ and {\bf
Pos}-$S/B_S$, where Emb is the class of all order-embeddings of
$S$-posets.
\begin{theorem}\label{6}
All generators in {\bf Pos}-$S$ are Emb-injective if and only if
all $S$-posets are Emb-injective.
\end{theorem}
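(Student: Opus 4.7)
The backward direction is immediate, since every generator in \textbf{Pos}-$S$ is in particular an $S$-poset. For the forward direction, my strategy is to realize each $S$-poset $A$ as a retract of some generator $G$ in \textbf{Pos}-$S$. This suffices: given an embedding $h\colon U\hookrightarrow V$ and an $S$-poset map $u\colon U\to A$, push $u$ along the section $A\to G$, extend by Emb-injectivity of $G$ to some $\tilde{u}\colon V\to G$, and post-compose with the retraction $G\to A$; the composite $V\to A$ then extends $u$, proving $A$ is Emb-injective.

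The natural candidate for $G$ (when $A$ is nonempty) is the coproduct $A\coprod S_S$ in \textbf{Pos}-$S$. Fixing $a_0\in A$, a retraction $r\colon G\to A$ is given by the identity on the $A$-summand and by $s\mapsto a_0 s$ on the $S$-summand, while the coproduct injection $A\to G$ is its section, so $A$ is a retract of $G$. To confirm $G$ is a generator, I would apply Laan's Theorem~2.1 (cited in Section~2), which characterizes generators in \textbf{Pos}-$S$ as exactly those $S$-posets admitting an $S$-poset epimorphism onto $S_S$: the identity on the $S$-summand of $G$ already surjects onto $S_S$, so any coproduct morphism $G\to S_S$ one can form will automatically be epimorphic.

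The main obstacle is thus producing the coproduct morphism $G\to S_S$ at all, i.e.\ securing an $S$-poset map $A\to S_S$ to pair with $\mathrm{id}_{S_S}$. For pomonoids with a right zero $z\in S$ this is immediate from the constant map $a\mapsto z$. In the general case, the construction of $G$ may need to be refined---for instance by supplementing the $S$-summand with a free $S$-poset $F=P\times S$ on a nonempty poset $P$, which unconditionally maps epimorphically onto $S_S$ via $(p,s)\mapsto s$ and whose retraction to $A$ can be defined by $(p,s)\mapsto a_0 s$. Striking this balance---preserving $A$ as a retract while ensuring $G$ is a generator---is the heart of the argument, and the empty $S$-poset, should it belong to the category, must be treated separately (it is Emb-injective exactly when no nontrivial embedding out of $\emptyset$ exists).
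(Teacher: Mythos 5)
The backward implication and the general principle you invoke (a retract of an Emb-injective object is Emb-injective) are fine, but the forward direction has a genuine gap at exactly the point you flag as ``the heart of the argument,'' and the repair you sketch cannot work. By Laan's Theorem~2.1 a generator $G$ admits an $S$-poset epimorphism $\pi\colon G\to S_S$; so if $A$ were a retract of a generator $G$ with section $\iota\colon A\to G$, then $\pi\iota$ would be an $S$-poset map $A\to S_S$. Such maps need not exist: an $S$-poset map from the one-element $S$-poset $\Theta$ to $S_S$ is precisely an element $z\in S$ with $zs=z$ for all $s$, and for $S=(\mathbb{N},+,0)$ with the discrete order there is no such element. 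Hence $\Theta$ is not a retract of \emph{any} generator, and $\Theta\coprod S_S$, $\Theta\coprod(P\times S)$, and more generally $\Theta\coprod X$ for every $X_S$ admit no $S$-poset map to $S_S$ at all (its restriction to the $\Theta$-summand would already be one), so none of them is a generator. Your proposed refinement therefore attacks the wrong summand: the obstruction lives on the $A$-component of the coproduct, and enlarging the complementary component by a free $S$-poset cannot remove it. The strategy ``realize every $A_S$ as a retract of a generator'' is unsalvageable for general pomonoids.

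The paper takes the product rather than the coproduct: $A_S\times S_S$ \emph{is} a generator for every nonempty $A_S$, because the projection $\pi_S\colon A\times S\to S$ is a surjective $S$-poset map defined on all of $A\times S$, so no morphism $A\to S_S$ is needed to witness the epimorphism onto $S_S$. That is the decisive difference between the two constructions and is the fix you should adopt. Be aware, though, that the remaining step --- passing from Emb-injectivity of $A_S\times S_S$ to that of $A_S$ --- is stated in one line in the paper and is itself not the naive retract argument (for the same reason as above, $A$ is a retract of $A\times S_S$ only when a map $A\to S_S$ exists); compare the paper's subsequent proposition on weak regular $d$-injectivity, where the lift $\bar{u}(s)=(u(s),s)$ is available only because the test object there is a poideal of $S$ and so comes with a map into $S_S$. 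So after switching to the product you still owe an argument for why Emb-injectivity descends from $A\times S_S$ to the factor $A$ for arbitrary order-embeddings $U\hookrightarrow V$.
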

\begin{proof}
($\Longrightarrow$) Let $A_S$ be an $S$-poset. Consider $A_S\times
S_S$ which is a generator in {\bf Pos}-$S$ and so is
Emb-injective.
Hence $A_S$ is Emb-injective.\\
($\Longleftarrow$) It is clear.
\end{proof}
Note that the class of all embeddings of right poideals into $S_S$
is a subclass of all down-closed embeddings, i.e. all embeddings
$g: B\to C$ with the property that $g(B)$ is down-closed in $C$,
and hence a subclass of all embeddings.

\begin{definition}
{\rm An $S$-poset of $A_S$ is said {\it (principally) weakly
regularly $d$-injective} if it is injective with respect to all
embeddings of (principal) right poideals into $S_S$.}
\end{definition}
\begin{proposition}
If all generators in {\bf Pos}-$S$ are weakly regularly
$d$-injective then all $S$-posets are weakly regularly
$d$-injective.
\end{proposition}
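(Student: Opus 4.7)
The plan is to follow the same strategy as in Theorem~\ref{6}, using the product with $S_S$ to produce a generator and then exploiting the fact that every $S$-poset is a retract of such a product.

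First, let $A_S$ be an arbitrary $S$-poset. I would consider the product $A_S\times S_S$ in {\bf Pos}-$S$, equipped with componentwise order and action. As noted in the proof of Theorem~\ref{6}, this $S$-poset is a generator in {\bf Pos}-$S$; indeed, $S_S$ is the canonical generator (for any $\alpha\neq\beta: X\to Y$ picking $x$ with $\alpha(x)\neq\beta(x)$, the map $s\mapsto xs$ from $S_S$ separates them), and $S_S$ is a retract of $A_S\times S_S$ via the second projection (split by $s\mapsto(a_0,s)$ for any fixed $a_0\in A$), so $A_S\times S_S$ is itself a generator by the usual lifting argument for generators through retractions.

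Next, by hypothesis $A_S\times S_S$ is weakly regularly $d$-injective. Now $A_S$ is a retract of $A_S\times S_S$: the first projection $\pi_1: A_S\times S_S\to A_S$ is an $S$-poset map split by the section $\iota: A_S\to A_S\times S_S$, $a\mapsto(a,1)$, which is easily checked to be an $S$-poset map satisfying $\pi_1\iota=\id_{A_S}$.

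Finally, I would invoke the standard fact that, for any class $\mathcal{M}$ of morphisms in a category, retracts of $\mathcal{M}$-injective objects are $\mathcal{M}$-injective. Concretely, given an embedding $h: U\to S$ of a right poideal and a map $u: U\to A_S$, one forms $\iota u: U\to A_S\times S_S$, extends it along $h$ using the weak regular $d$-injectivity of $A_S\times S_S$ to obtain $s: S_S\to A_S\times S_S$, and then $\pi_1 s: S_S\to A_S$ is the required extension of $u$. Hence $A_S$ is weakly regularly $d$-injective, completing the proof.

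The argument is essentially routine; the only minor point worth checking carefully is that $A_S\times S_S$ really is a generator (for which the retract-of-a-generator-is-a-generator lemma is enough) and that retracts preserve injectivity with respect to the class of embeddings of right poideals into $S_S$, which follows formally and is not specific to this class.
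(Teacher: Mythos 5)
Your overall strategy (pass to $A_S\times S_S$, extend there, then project back to $A_S$) is exactly the paper's, but both of the section maps you use to implement it fail to be morphisms of {\bf Pos}-$S$, and this is a genuine gap. The map $\iota: A_S\to A_S\times S_S$, $a\mapsto(a,1)$, is not $S$-equivariant: with the componentwise action one has $\iota(a)s=(a,1)s=(as,s)$, whereas $\iota(as)=(as,1)$, so $\iota$ is an $S$-poset map only when the second coordinate is acted on trivially. Hence $A_S$ is \emph{not} in general a retract of $A_S\times S_S$ (a splitting of $\pi_1$ would have the form $a\mapsto(a,g(a))$ with $g:A_S\to S_S$ equivariant, which forces $g(a)$ to be a left zero of $S$ and need not exist), and the ``retracts of injectives are injective'' lemma cannot be invoked. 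The same defect occurs in your argument that $A_S\times S_S$ is a generator: $s\mapsto(a_0,s)$ satisfies $(a_0,s)t=(a_0t,st)$, so it is equivariant only if $a_0$ is a fixed point of the action.

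Both defects are repairable, and the repairs recover the paper's proof. For the generator claim, note that the second projection $\pi_{S}: A_S\times S_S\to S_S$ is a surjective $S$-poset map (for $A\neq\emptyset$), and by Theorem 2.1 of \cite{L} an $S$-poset admitting an epimorphism onto $S_S$ is a generator; no splitting of $\pi_S$ is needed. For the extension step, given the embedding $h:I_S\to S_S$ of a poideal and $u:I_S\to A_S$, do not lift $u$ by pairing with the constant $1$; instead pair it with the inclusion, i.e.\ set $\bar u(x)=(u(x),x)$. This \emph{is} an $S$-poset map, since $\bar u(xs)=(u(x)s,xs)=(u(x),x)s$, and extending $\bar u$ over $h$ using the $d$-injectivity of $A_S\times S_S$ and composing with $\pi_A$ gives the required extension of $u$. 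This is precisely the map the paper uses, so once the equivariance issue is fixed your argument coincides with the published one.
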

\begin{proof}
Let $A_S$ be an $S$-poset. Consider $A_S\times S_S$ which is a
generator in {\bf Pos}-$S$ and so is weakly regularly
$d$-injective. To show that $A_S$ is weakly regularly
$d$-injective consider the following diagram
$$\xymatrix{I_S \ar@{{>}->}[d]_{i}\ar[r]^{u} &A_S\\ S_S &}$$
where $I$ is a poideal of $S$. Define $S$-poset map $\bar{u}:
I_S\to A_S\times S_S$ by $\bar{u}(s)=(u(s), s)$ for each $s\in S$.
Hence, by assumption, there exists an $S$-poset map $v: S_S\to
A_S\times S_S$ such that $vi=\bar{u}$. By composition with the
projection $\pi_A: A_S\times S_S\to A_S$, we get that $A_S$ is a
weakly regularly $d$-injective.
\end{proof}
Recall that for a pomonoid $S$ an element $s\in S$ is called a
{\it regular element} if there exists $t\in S$ such that $sts=s$.
One calls $S$ a {\it regular pomonoid} if all its elements are
regular.
\begin{theorem}
Let $S$ be a pomonoid whose the identity element is the top element.
Then the following statements are equivalent:\\
{\rm (i)} All $S$-posets are principally weakly regularly $d$-injective.\\
{\rm (ii)} All principal right poideals of $S$ are principally
weakly regularly $d$-injective.\\
{\rm (iii)} All generators in {\bf Pos}-$S$ are principally
weakly regularly $d$-injective.\\
{\rm (iv)} $S$ is a regular pomonoid.
\end{theorem}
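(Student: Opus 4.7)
The plan is to run two short cycles of implications: $(i)\Rightarrow(ii)\Rightarrow(iv)\Rightarrow(i)$ together with $(i)\Rightarrow(iii)\Rightarrow(i)$. The implications $(i)\Rightarrow(ii)$ and $(i)\Rightarrow(iii)$ are immediate since every principal right poideal and every generator is itself an $S$-poset. For $(iii)\Rightarrow(i)$ I would repeat the argument of the preceding proposition: given any $A_S$, the product $A_S\times S_S$ is a generator; lift a map $u:I\to A_S$ on a principal right poideal $I$ to $\bar u:I\to A_S\times S_S$ via $\bar u(x)=(u(x),x)$, extend $\bar u$ along $I\hookrightarrow S_S$ by hypothesis $(iii)$, and then compose with the projection $\pi_{A}$ to obtain the required extension of $u$.

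For $(ii)\Rightarrow(iv)$, I would fix $s\in S$ and apply principal weak regular $d$-injectivity of $\downarrow sS$ itself to the identity $\mathrm{id}:\downarrow sS\to\downarrow sS$ along the embedding $\downarrow sS\hookrightarrow S_S$, producing an $S$-poset map $v:S_S\to\downarrow sS$ with $v(x)=x$ for every $x\in\downarrow sS$. Setting $w:=v(1)\in\downarrow sS$, there is a $t\in S$ with $w\leq st$, and $ws=v(s)=s$. Since $1$ is the top, $t\leq 1$ yields $st\leq s$ and hence $sts\leq s\cdot s\leq s$; on the other hand, multiplying $w\leq st$ on the right by $s$ gives $s=ws\leq sts$. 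Combining the two inequalities produces $s=sts$, so $s$ is regular.

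The main obstacle is $(iv)\Rightarrow(i)$, and here the hypothesis that $1$ is the top is used essentially. First I would observe that under $(iv)$ every element $s\in S$ is idempotent: writing $sts=s$ and $e:=st$, the inequalities $t\leq 1$ and $s\leq 1$ give $e\leq s$ and then $s=es\leq s^{2}\leq s$, so $s=s^{2}$. Next, because $sr\leq s$ for every $r\in S$, the principal right poideal $\downarrow sS$ coincides with the downset $\downarrow s=\{x\in S:x\leq s\}$, and one may define $\rho:S_S\to\downarrow sS$ by $\rho(r):=sr$, which is plainly a monotone $S$-poset map. To verify that $\rho$ is a retraction of the embedding $\downarrow sS\hookrightarrow S_S$, take $x\leq s$: regularity and hence idempotency of $x$ yield $x=x^{2}\leq sx$, while $s\leq 1$ yields $sx\leq x$, so $\rho(x)=sx=x$. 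Given any $u:\downarrow sS\to A_S$, the composite $u\circ\rho:S_S\to A_S$ then extends $u$, and every $A_S$ is principally weakly regularly $d$-injective.

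The delicate point is really the pointwise identity $sx=x$ for $x\in\downarrow s$ in step $(iv)\Rightarrow(i)$: it rests on combining $sx\leq x$ (from $s\leq 1$) with $x=x^{2}\leq sx$ (from $x\leq s$ and idempotency of $x$ furnished by regularity). The same interaction between the top-element hypothesis and the algebraic structure is what forces $sts\leq s$ in $(ii)\Rightarrow(iv)$, so the whole chain of equivalences is driven by this compatibility between regularity and the assumption $1=\top$.
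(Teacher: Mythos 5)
Your proof is correct, and the cycle $(i)\Rightarrow(ii)\Rightarrow(iv)\Rightarrow(i)$ together with $(i)\Leftrightarrow(iii)$ matches the paper's organization; the implications $(i)\Rightarrow(ii)$, $(i)\Leftrightarrow(iii)$ (via the generator $A_S\times S_S$ and the graph map $x\mapsto(u(x),x)$) and $(ii)\Rightarrow(iv)$ (retract the inclusion $\downarrow sS\hookrightarrow S_S$, evaluate at $1$, and use $1=\top$ to get $sts\leq s$) are essentially identical to what the paper does. The genuine divergence is in $(iv)\Rightarrow(i)$: the paper simply cites Theorem 3.6 of the Shahbaz--Mahmoudi preprint, whereas you give a self-contained argument. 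Your argument is worth keeping: you observe that regularity plus $1=\top$ forces every element to be idempotent (from $sts=s$ and $st\leq s$ one gets $s\leq s^{2}\leq s$), hence $\downarrow sS=\downarrow s$ and $r\mapsto sr$ is an $S$-poset retraction of $S_S$ onto $\downarrow sS$ fixing it pointwise (since $x\leq s$ gives $x=x^{2}\leq sx\leq x$), so every inclusion of a principal right poideal splits and \emph{every} $S$-poset is trivially injective with respect to it. This buys two things the paper's citation does not: the proof is elementary and internal to the stated hypotheses, and it exposes the structural fact that such a pomonoid is a band with all principal right poideals retracts of $S_S$ --- a strictly stronger conclusion than $(i)$. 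The only point to be careful about is that ``injective with respect to embeddings of principal right poideals into $S_S$'' is read here as injectivity with respect to the inclusion maps (as the paper itself does in $(ii)\Rightarrow(iv)$); for a general order-embedding $h:\downarrow zS\to S_S$ your argument still goes through because the image equals $h(z)S$ with $h(z)$ idempotent, so it too is a retract.
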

\begin{proof}
${\rm (i)}\Longrightarrow {\rm (ii)}$ is trivial.\\
${\rm (i)}\Longleftrightarrow {\rm (iii)}$ Proposition
\ref{6}.\\
${\rm (ii)}\Longrightarrow {\rm (iv)}$ For every $s\in S$,
consider the down closed embedding $i: \downarrow sS\to S_S,
x\mapsto x$. Then it has a left inverse $f$, since $\downarrow sS$
is principally weakly regularly $d$-injective. Now, taking
$f(1)=z$, we have $z\leq st$ for some $t\in S$ and
$$s=f(s)=f(1)s=zs\leq sts.$$
Also, $sts\leq s$, since 1 is the top element of $S$. Therefore
$s$ is a regular element. This means that $S$ is a regular
pomonoid.\\
${\rm (iv)}\Longrightarrow {\rm (i)}$ See Theorem 3.6 in
\cite{S.M}.
\end{proof}
Recall from \cite{E.M.R} that a pomonoid $S$ which has no proper
non-empty left (right) poideal is said to be left (right) {\it
simple}.
\begin{corollary}
If all generators in {\bf Pos}-$S$ are Emb-injective then $S$ is
left simple.
\end{corollary}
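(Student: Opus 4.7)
The plan is to first apply Theorem~\ref{6}, which upgrades the hypothesis ``all generators in {\bf Pos}-$S$ are Emb-injective'' to the stronger-looking statement that \emph{every} $S$-poset is Emb-injective, and in particular $S_S$ itself. The task then reduces to showing that this injectivity property forces $S$ to have no proper non-empty left poideal. I would work with the equivalent reformulation of left simplicity: for every $a\in S$, $\downarrow\! Sa = S$, i.e.\ for each $b\in S$ there exists $t\in S$ with $b\leq ta$.

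The main line of attack is by contradiction. Suppose $I\subsetneq S$ is a non-empty left poideal, and pick $a\in I$ and $b\in S\setminus I$. I would build an order-embedding $h\colon U\hookrightarrow V$ in {\bf Pos}-$S$ together with an $S$-poset map $u\colon U\to A$ (for $A=S_S$) that admits no Emb-extension along $h$, contradicting the hypothesis. A natural candidate is to take $V=S_S\cup\{p\}$ for a new element $p$ with trivial action $p\cdot s=p$, positioned in the order so as to record the separation between $I$ and $S\setminus I$ (for instance $a\leq p$ for all $a\in I$ and $p\leq b$ for all $b\notin I$, whenever compatible with action preservation). Any extension of $\mathrm{id}_{S_S}$ to $V\to S_S$ would send $p$ to some $q\in S$ that is both a right fixed point ($qs=q$ for all $s$) and squeezed in an order-theoretically impossible position. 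An alternative would be to pass to the right-poideal companion $J:=\{x\in S:xS\subseteq I\}$, which is a right sub-$S$-poset of $S_S$ with $J\subseteq I$; Emb-injectivity of $J$ forces a retraction $S_S\to J$, hence $J=eS$ for an idempotent $e\in I$, and the combination $eS\subseteq I$, $Se\subseteq I$, $e^2=e$ should be driven to force $I=S$.

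The main obstacle is that left poideals of $S$ are not naturally right sub-$S$-posets of $S_S$, while Emb-injectivity is phrased entirely in terms of right-$S$-poset order-embeddings. The crux is therefore to encode the left-ideal data of $I$ into a carefully chosen right-$S$-poset enlargement of $S_S$ (or into the auxiliary right poideal $J$), so that Emb-injectivity has something to bite on, and then to verify that compatibility of the action with the prescribed order relations at $p$ really is violated unless $I=S$. Nailing down that consistency check is the delicate step I would expect to take the most care to finalise.
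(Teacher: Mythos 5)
Your first step---invoking Theorem~\ref{6} to upgrade the hypothesis to ``all $S$-posets are Emb-injective''---is exactly how the paper begins. But the paper then finishes in one line by citing Theorem~3.9 of Ebrahimi--Mahmoudi--Rasouli (\cite{E.M.R}), which supplies precisely the implication ``all complete $S$-posets regular injective $\Rightarrow$ $S$ left simple.'' You instead try to prove that implication from scratch, and this is where your proposal has a genuine gap: neither of your two candidate constructions is actually carried through, and both run into the obstruction you yourself flag. The enlargement $V=S_S\cup\{p\}$ with $ps=p$ and $a\leq p$ for $a\in I$ is not a well-defined $S$-poset in general: compatibility of the order with the \emph{right} action forces $as\leq ps=p$ for all $s\in S$, i.e.\ it puts $IS$ below $p$, but $I$ is only a \emph{left} poideal, so $IS$ need not stay inside $I$ (and dually $p\leq b$ for $b\notin I$ would force $bs\notin{\downarrow}\,\{p\}$ for all $s$, which also fails). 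Your fallback $J=\{x: xS\subseteq I\}$ is indeed a right poideal, but it may well be empty, and even when Emb-injectivity yields $J=eS$ for an idempotent $e\in I$, you give no argument that this forces $1\in I$, i.e.\ $I=S$. So the heart of the corollary---the passage from universal Emb-injectivity to left simplicity---remains unproved in your write-up.

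If you want a self-contained argument rather than the citation, the standard route (the one behind the quoted theorem in \cite{E.M.R}) is different from yours: one works with a \emph{left} poideal $I$ by forming the right $S$-poset $S_S$ with an adjoined order relation, or more directly by using the Rees-type quotient/two-element chain tests adapted to down-closed embeddings, rather than trying to wedge a single action-trivial point $p$ between $I$ and its complement. As it stands, your proposal correctly reduces the problem via Theorem~\ref{6} but does not close it.
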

\begin{proof}
From hypothesis and Proposition \ref{6}, we conclude that all
complete $S$-posets are Emb-injective. Now Theorem 3.9 in
\cite{E.M.R} allow us to say that $S$ is left simple.
\end{proof}
\begin{proposition}
For any pomonoid $S$ the following statements are equivalent:\\
{\rm (i)}~ All generators in {\bf Pos}-$S$ are complete $S$-posets.\\
{\rm (ii)} All $S$-posets are complete.
\end{proposition}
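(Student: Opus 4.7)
The plan is as follows. The direction (ii)$\Rightarrow$(i) is immediate, since every generator in {\bf Pos}-$S$ is in particular an $S$-poset. For (i)$\Rightarrow$(ii) I would copy the template already used in the proof of Theorem~\ref{6}: starting from an arbitrary $S$-poset $A_S$, form the product $A_S\times S_S$ (with componentwise order and componentwise action $(a,s)t=(at,st)$), show that it is a generator in {\bf Pos}-$S$, conclude from (i) that it is complete, and then transfer completeness back to $A_S$ itself.

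To verify that $A_S\times S_S$ is a generator, I would take any two distinct $S$-poset maps $\alpha,\beta\colon X_S\to Y_S$, pick $x_0\in X$ with $\alpha(x_0)\neq\beta(x_0)$, and define $g\colon A_S\times S_S\to X_S$ by $g(a,s)=x_0 s$. One checks directly that $g$ is monotone, that $g((a,s)t)=x_0(st)=(x_0 s)t=g(a,s)t$, and that $\alpha g(a,1)=\alpha(x_0)\neq\beta(x_0)=\beta g(a,1)$, so $\alpha g\neq \beta g$.

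Now assume $A\times S$ is a complete poset. For any $B\subseteq A$, I would form $B\times\{1\}\subseteq A\times S$ and take its supremum $(x_0,s_0)$ in $A\times S$. From $(b,1)\leq(x_0,s_0)$ one reads $b\leq x_0$ for every $b\in B$, so $x_0$ is an upper bound of $B$ in $A$. If $y\in A$ is any other upper bound of $B$, then $(y,1)$ is an upper bound of $B\times\{1\}$ in $A\times S$, hence $(x_0,s_0)\leq(y,1)$, and in particular $x_0\leq y$. Thus $\sup_A B=x_0$ exists in $A$, and the dual argument applied to the infimum of $B\times\{1\}$ yields $\inf_A B$, so $A_S$ is complete.

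No step here is a genuine obstacle; the whole argument is bookkeeping once the construction $A_S\times S_S$ is in hand. The only subtle point is the observation that testing against competitor bounds of the form $(y,1)$ in $A\times S$ (rather than arbitrary $(y,t)$) is enough to force the first coordinate of the sup and inf of $B\times\{1\}$ to agree with $\sup_A B$ and $\inf_A B$ respectively.
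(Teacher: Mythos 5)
Your proof is correct and follows exactly the paper's route: the paper likewise handles (i)$\Rightarrow$(ii) by observing that $A_S\times S_S$ is a generator, invoking (i) to get its completeness, and then noting it is ``easily seen'' that $A_S$ is complete. Your write-up merely supplies the routine details the paper omits (the map $g(a,s)=x_0s$ witnessing that $A_S\times S_S$ is a generator, and the extraction of $\sup_A B$ and $\inf_A B$ from the first coordinate of the sup and inf of $B\times\{1\}$).
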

\begin{proof}
${\rm (i)}\Longrightarrow {\rm (ii)}$. Let $A_S$ be an $S$-poset.
Consider the generator $A_S\times S_S$ which is a complete
$S$-poset by assumption. Now, it is easily seen that $A_S$ is
complete.\\
${\rm (ii)}\Longrightarrow {\rm (i)}$. It is trivial.
\end{proof}

The authors in~\cite{FF.M2}, showed  the following proposition
which is useful in the sequel.

\begin{proposition}\label{Emb-injective object}
Let $S$ be a pomonoid. Suppose $f: A_S\to B_S$ is an Emb-injective
object in {\bf Pos}-$S/B_S$ for any $S$-poset $B_S$. Then $f$ is a
split epimorphism.
\end{proposition}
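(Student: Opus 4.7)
The plan is to test the Emb-injectivity of $f$ in $\textbf{Pos}$-$S/B_S$ against a single carefully chosen embedding built from the coproduct $A\sqcup B$ in $\textbf{Pos}$-$S$. The guiding intuition is that a section of $f$ is exactly a morphism $B\to A$ in the slice over $B$, so Emb-injectivity ought to produce one if we can feed it the right data.

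First, I would form the coproduct $C:=A\sqcup B$ in $\textbf{Pos}$-$S$; concretely it is the disjoint union with componentwise action and with no order comparisons between the two summands, so the canonical injection $\iota_A\colon A\to C$ is automatically an order-embedding, hence lies in Emb. Next, I would use the universal property of the coproduct to define $v\colon C\to B$ by $v|_A=f$ and $v|_B=\id_B$; this is a well-defined morphism in $\textbf{Pos}$-$S$ because $f$ and $\id_B$ both are. Taking $u:=\id_A\colon A\to A$ and $h:=\iota_A$, the square
$$\xymatrix{A\ar[r]^{\id_A}\ar[d]_{\iota_A} & A\ar[d]^{f}\\ C\ar[r]_{v} & B}$$
commutes trivially, since both composites equal $f$.

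At this point the hypothesis that $f$ is Emb-injective in $\textbf{Pos}$-$S/B_S$ supplies a diagonal fill-in $s\colon C\to A$ with $s\iota_A=\id_A$ and $fs=v$. The desired splitting is then simply $g:=s\iota_B\colon B\to A$, because
$$f\circ g \;=\; f s\,\iota_B \;=\; v\,\iota_B \;=\; \id_B,$$
exhibiting $f$ as a split epimorphism.

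I do not foresee a real obstacle: the only non-formal point is that $\iota_A$ is an order-embedding in $\textbf{Pos}$-$S$, which is immediate from the explicit description of coproducts in $\textbf{Pos}$-$S$ as disjoint unions. The entire content of the argument is the choice of test embedding $\iota_A\colon A\hookrightarrow A\sqcup B$ together with the codiagonal-type map $v=[f,\id_B]$, after which Emb-injectivity does all the work.
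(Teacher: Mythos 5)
Your argument is correct and complete: coproducts in {\bf Pos}-$S$ are disjoint unions with no cross-order, so $\iota_A\colon A\to A\sqcup B$ is an order-embedding, the square with $u=\id_A$ and $v=[f,\id_B]$ commutes, and the diagonal $s$ restricted along $\iota_B$ gives the section of $f$. The paper itself offers no proof here (it quotes the result from an external reference), so there is nothing to compare against, but your coproduct-plus-codiagonal argument is the standard self-contained one; the only point worth noting is that the same conclusion would follow even faster from the vacuous embedding $\emptyset\hookrightarrow B$ if one admits the empty $S$-poset, and your route has the virtue of sidestepping that convention entirely.
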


\begin{corollary} Suppose $f: A_S\to B_S$ is an Emb-injective
object in {\bf Pos}-$S/B_S$. If $A$ is a complete lattice which is
a retract of $A^{(S)}$, then $A_S$ and $B_S$ are Emb-injective in
{\bf Pos}-$S$.
\end{corollary}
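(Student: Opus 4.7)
The plan is to leverage Proposition \ref{Emb-injective object} twice: once to split $f$, and then to transport Emb-injectivity along the splitting. First I would apply Proposition \ref{Emb-injective object} to the given Emb-injective object $f\colon A_S \to B_S$ in {\bf Pos}-$S/B_S$ to obtain an $S$-poset map $g\colon B_S\to A_S$ with $fg=\id_{B_S}$. This is the crucial observation that $B_S$ sits inside $A_S$ as a retract in {\bf Pos}-$S$, via the pair $(g,f)$.

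Next I would argue that $A_S$ is Emb-injective in {\bf Pos}-$S$. The hypothesis ``$A$ is a complete lattice'' turns the underlying poset of $A$ into an Emb-injective object in {\bf Pos}, and the copower/free-style construction $A^{(S)}$ should be Emb-injective in {\bf Pos}-$S$ by a coordinatewise/pointwise argument: given an embedding $h\colon U\hookrightarrow V$ in {\bf Pos}-$S$ and an $S$-poset map $u\colon U\to A^{(S)}$, one extends $u$ componentwise using Emb-injectivity of $A$ at the level of posets, then verifies that the extension respects the $S$-action. Since, by hypothesis, $A_S$ is a retract of $A^{(S)}$ in {\bf Pos}-$S$ (say via $\iota\colon A_S\to A^{(S)}$ and $\pi\colon A^{(S)}\to A_S$ with $\pi\iota=\id_{A_S}$), and retracts of Emb-injectives are Emb-injective by the standard diagonal argument, I conclude $A_S$ is Emb-injective.

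Finally, the same retract principle applied to the splitting $fg=\id_{B_S}$ transports Emb-injectivity from $A_S$ to $B_S$: given any embedding $h\colon U\hookrightarrow V$ and any $v\colon U\to B_S$, the composite $gv\colon U\to A_S$ extends along $h$ to some $w\colon V\to A_S$, and then $fw\colon V\to B_S$ is the required extension of $v$.

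The main obstacle is the middle step: identifying precisely which feature of $A^{(S)}$ allows Emb-injectivity to pass from the complete-lattice hypothesis on $A$ up to {\bf Pos}-$S$. This is where one must appeal to (or re-prove) the characterization of Emb-injective $S$-posets from \cite{FF.M2}, rather than purely formal category theory; the splitting and the two retract transfers are routine once that characterization is in hand.
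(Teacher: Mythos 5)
Your proposal is correct and follows essentially the same route as the paper: the hypothesis on $A$ yields Emb-injectivity of $A_S$ via the Banaschewski-type characterization (the paper simply cites \cite{E.M.R} for this, where you sketch the cofree/retract argument yourself), and Proposition \ref{Emb-injective object} splits $f$ so that $B_S$ inherits Emb-injectivity as a retract of $A_S$. The only minor slip is attributing the characterization of Emb-injective $S$-posets to \cite{FF.M2} rather than \cite{E.M.R}.
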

\begin{proof}
By hypothesis we conclude that $A_S$ is an Emb-injective $S$-poset
(see \cite{E.M.R}). Also, $f$ is a split epimorphism (Proposition
\ref{Emb-injective object}), consequently $B_S$ as a retract of an
Emb-injective is an Emb-injective $S$-poset.
\end{proof}
\begin{remark}
{\rm There exists a split epimorphism in {\bf Pos}-$S$ which is not
Emb-injective in {\bf Pos}-$S/B_S$. To present an example, take an
arbitrary pomonoid $S$ and $X, B$ are two lattices as shown in the
following
$$
\xymatrix{& \stackrel{\top}{\circ}& \\
a\circ\ar@{-}[ur] & & \circ b\ar@{-}[ul]\\
&\stackrel{\circ}{\bot}\ar@{-}[ul]\ar@{-}[ur] &} \qquad\qquad
\xymatrix{ \circ 1\\ \\\circ 0\ar@{-}[uu]}
$$
Then $X$ is an $S$-poset with the action defined by $\top s=\top$
and $as=bs=\bot s=a$ for all $s\in S$, also we consider $B$ with
the trivial action as an $S$-poset. Define the $S$-poset map $f:
X_S\rightarrow B_S\in \textbf{Pos}-S/B$, by $f(a)=f(b)=f(\bot)=0$
and $f(\top)=1$. It also is a convex map. We show that it is not a
regular injective object in {\bf Pos}-$S/B_S$. Since
$f^{-1}(0)=\{\bot , a , b\}$ is not a complete lattice, the
authors in \cite{FF.M}, showed that it is not Emb-injective in
{\bf Pos}-$S/B_S$.\\
On the other hands, define the $S$-poset map $g: B_S\rightarrow
X_S\in$\textbf{Pos}-$S/B_S,$ by $g(1)=\top, g(0)=g(\bot)$. Then we
have $fg={\text{id}}_B$, so $f$ is split epimorphism. Therefore,
the converse of the above theorem is not true generally.}
\end{remark}
At the rest of this section, we investigate some connections
between Emb-injectivity in {\bf Pos}-$S/B_S$ and generators and
cyclic projectives in {\bf Pos}-$S$.
\begin{theorem}\label{4}
If $f: A_S\to B_S$ is an Emb-injective object in {\bf Pos}-$S/B_S$
and $B_S$ is a generator $S$-poset then $A_S$ is a generator.
Further, $_{\rm{End}(A_S)} A$ is a cyclic projective in
$\rm{End}(A_S)$-{\bf Pos}.
\end{theorem}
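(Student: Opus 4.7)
The plan hinges on Proposition \ref{Emb-injective object}, which tells us that $f:A_S\to B_S$ is a split epimorphism: there exists $g:B_S\to A_S$ with $fg=\mathrm{id}_{B_S}$.

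For the first assertion, I would invoke the characterization of generators implicit in Theorem 2.1 of \cite{L} (already used in Lemma \ref{poideal}): $G_S$ is a generator in \textbf{Pos}-$S$ precisely when there is an $S$-poset epimorphism (equivalently, a surjection) $G_S\to S_S$. Since $B_S$ is a generator, we have a surjection $\phi:B_S\to S_S$; then $\psi:=\phi f:A_S\to S_S$ is again a surjection because $f$ is (as a split epi), so $A_S$ is a generator. Equivalently one can argue pointwise: for distinct $\alpha,\beta:X\to Y$, if $h:B_S\to X$ separates them, then $hf:A_S\to X$ does too, since $\alpha hf=\beta hf$ would give $\alpha h=\beta h$ after postcomposition with $g$.

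For the cyclic projective statement, write $T=\mathrm{End}(A_S)$. By the left-act analog of Proposition \ref{1}, cyclic projectives in $T$-\textbf{Pos} are precisely retracts of ${}_TT$, so it suffices to exhibit ${}_TA$ as a retract of ${}_TT$. Using the surjection $\psi:A_S\to S_S$ from the first part, pick $a_0\in A$ with $\psi(a_0)=1$, and define
$$h:{}_TT\to{}_TA,\ \sigma\mapsto\sigma(a_0),\qquad s:{}_TA\to{}_TT,\ s(a)(x):=a\cdot\psi(x).$$
Then $h\circ s=\mathrm{id}_A$, since $s(a)(a_0)=a\cdot\psi(a_0)=a$. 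The routine verifications are (i) each $s(a)$ is an $S$-poset endomorphism of $A$, using that $\psi$ is $S$-equivariant and monotone; (ii) both $h$ and $s$ are left-$T$-poset maps, the crucial identity being $s(\tau\cdot a)=\tau\circ s(a)$, which follows because every $\tau\in T$ is an $S$-poset map, so $\tau(a\psi(x))=\tau(a)\psi(x)$; and (iii) both maps are monotone for the pointwise order on $T$. As a byproduct every $a=s(a)(a_0)=s(a)\cdot a_0$, so ${}_TA=T\cdot a_0$ is cyclic, and as a retract of ${}_TT$ it is projective.

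The only delicate step is guessing the section $s$: one must manufacture, from a mere element $a\in A$ together with the generator map $\psi$, an $S$-poset endomorphism of $A$ that is also natural with respect to the left $T$-action. Once the formula $s(a)(x)=a\cdot\psi(x)$ is in hand the rest is formal; the remainder of the argument is a mechanical consequence of Proposition \ref{Emb-injective object} and the generator criterion of \cite{L}.
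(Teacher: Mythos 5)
Your proposal is correct and follows essentially the same route as the paper: Proposition \ref{Emb-injective object} gives a section $g$ of $f$, so $f$ is an epimorphism onto a generator's... rather, from a generator's preimage side, $A_S$ maps onto $S_S$ via $\phi f$ and is therefore a generator by Theorem 2.1 of \cite{L}, exactly as the paper argues. The only difference is that for the second assertion the paper simply cites Theorem 2.2 of \cite{L} (a generator $A_S$ has ${}_{\mathrm{End}(A_S)}A$ cyclic projective), whereas you inline a correct proof of that cited result via the explicit retraction $s(a)(x)=a\psi(x)$, $h(\sigma)=\sigma(a_0)$.
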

\begin{proof}
Since $f: A_S\to B_S$ is Emb-injective so by Proposition
\ref{Emb-injective object}, there exists $g: B_S\to A_S$ in {\bf
Pos}-$S$ such that $fg={\text{id}}_B$. Also, $B_S$ is a generator
in {\bf Pos}-$S$ and $f$ is an epimorphism so $A_S$ is a generator
(see \cite{L}). Now, applying this fact and Theorem 2.2 from
\cite{L}, we get that $_{\text{End}(A_S)} A$ is a cyclic
projective.
\end{proof}
\begin{theorem}
Suppose $f: A_S\to B_S$ is an Emb-injective object in {\bf
Pos}-$S/B_S$ and $A_S$ is a cyclic projective $S$-poset. Then
$B_S$ is a cyclic projective $S$-poset. Moreover, $_{{\rm
End}(B_S)} B$ is a generator in ${\rm End}(B_S)$-{\bf Pos}.
\end{theorem}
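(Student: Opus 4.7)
The plan is to reduce the statement to Proposition \ref{Emb-injective object} together with the retract characterization of cyclic projectives in Proposition \ref{1}, and then invoke the $S$-poset analogue of the Morita-type result from \cite{L} used in Theorem \ref{4}.

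First, I would apply Proposition \ref{Emb-injective object} to the Emb-injective object $f\colon A_S\to B_S$ in {\bf Pos}-$S/B_S$ to obtain a morphism $g\colon B_S\to A_S$ in {\bf Pos}-$S$ with $fg={\rm id}_{B_S}$. Thus $B_S$ is a retract of $A_S$.

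Next, since $A_S$ is cyclic projective, Proposition \ref{1} gives morphisms $p\colon S_S\to A_S$ and $q\colon A_S\to S_S$ with $pq={\rm id}_{A_S}$, so $A_S$ is a retract of $S_S$. Composing, I obtain $(f\circ p)\colon S_S\to B_S$ and $(q\circ g)\colon B_S\to S_S$ with
\[
(f\circ p)\circ(q\circ g) \;=\; f\circ(pq)\circ g \;=\; f\circ{\rm id}_{A_S}\circ g \;=\; fg \;=\; {\rm id}_{B_S},
\]
so $B_S$ is a retract of $S_S$. Applying Proposition \ref{1} in the other direction gives that $B_S$ is cyclic projective.

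Finally, for the claim about $_{{\rm End}(B_S)}B$, having established that $B_S$ is cyclic projective in {\bf Pos}-$S$, I would apply the result from \cite{L} (the counterpart of Theorem 2.2 invoked in the proof of Theorem \ref{4}, which under the duality between generator/cyclic projective and cyclic projective/generator on the two sides gives this conclusion) to obtain that $_{{\rm End}(B_S)}B$ is a generator in ${\rm End}(B_S)$-{\bf Pos}. The only real obstacle is confirming that the cited result from \cite{L} indeed yields the generator statement on the endomorphism side from cyclic projectivity on the $S$-side; modulo that reference, the argument is a direct retract chase through Propositions \ref{Emb-injective object} and \ref{1}.
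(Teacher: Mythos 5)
Your proposal is correct and follows essentially the same route as the paper: obtain the splitting $g$ with $fg=\id_B$ from Proposition \ref{Emb-injective object}, compose the retraction of $A_S$ off $S_S$ from Proposition \ref{1} with $f$ and $g$ to exhibit $B_S$ as a retract of $S_S$, and then invoke the relevant result of \cite{L}. The reference you were unsure about is Proposition 3.1 of \cite{L}, which is exactly the statement that a cyclic projective $S$-poset $B_S$ has $_{{\rm End}(B_S)}B$ a generator in ${\rm End}(B_S)$-{\bf Pos}, so the final step goes through as you anticipated.
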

\begin{proof}
Since $f: A_S\to B_S$ is Emb-injective so by Proposition
\ref{Emb-injective object}, there exists $g: B_S\to A_S$ such that
$fg={\text{id}}_B$. Also, $A_S$ is a cyclic projective in {\bf
Pos}-$S$ hence by Proposition~\ref{1}, there exist two $S$-poset
maps $\xymatrix{ S_S \ar@<1ex>[r]^{\pi} & A_S \ar@<1ex>[l]^
{\gamma}}$ such that $\pi\gamma={\text{id}}_A$. Then we get
$f\pi\gamma g={\text{id}}_B$, we get $B_S$ is a cyclic projective
by Proposition \ref{1}. Now by Proposition 3.1 from \cite{L}, we
conclude that $_{{\rm End}(B_S)} B$ is a generator.
\end{proof}

\begin{theorem}\label{5}
Suppose $f: A_S\to B_S$ is an Emb-injective object in {\bf Pos}-$S/B_S$. Then\\
${\rm (i)}~{\bf Pos}_S(B_S, A_S)$ is a generator in {\bf
Pos}-$\rm{End}(B_S)$.\\
${\rm (ii)}~{\bf Pos}_S(A_S, B_S)$ is a generator in
$\rm{End}(B_S)$-{\bf Pos}.\\
${\rm (iii)}~{\bf Pos}_S(B_S, A_S)$ is a cyclic projective in
${\rm End}(A_S)$-{\bf Pos}.\\
${\rm (iv)}~{\bf Pos}_S(A_S, B_S)$ is a cyclic projective in {\bf
Pos}-{\rm End}($A_S)$.
\end{theorem}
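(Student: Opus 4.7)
The plan is to exploit Proposition \ref{Emb-injective object}, which yields a section $g:B_S\to A_S$ of the Emb-injective object $f$ in {\bf Pos}-$S/B_S$, so that $fg=\mathrm{id}_{B}$. With this single ingredient in hand, each of the four assertions reduces to writing down an explicit monotone $S$-poset map built out of pre- or post-composition with $f$ and $g$, and then invoking Theorem 2.1 of \cite{L} (for the generator statements) or Proposition \ref{1} (for the cyclic projective statements) in the appropriate one-sided endomorphism context.

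For (i) and (ii) I would produce epimorphisms onto the base pomonoid $\mathrm{End}(B_S)$. For (i), set
$$\Phi:{\bf Pos}_S(B_S,A_S)\to \mathrm{End}(B_S),\qquad \Phi(\phi)=f\circ\phi;$$
this is a right $\mathrm{End}(B_S)$-poset map, and $\Phi(g\circ\psi)=(fg)\circ\psi=\psi$ shows surjectivity. For (ii), set
$$\Psi:{\bf Pos}_S(A_S,B_S)\to \mathrm{End}(B_S),\qquad \Psi(\phi)=\phi\circ g;$$
this is a left $\mathrm{End}(B_S)$-poset map and $\Psi(\psi\circ f)=\psi\circ (fg)=\psi$ again gives surjectivity.

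For (iii) and (iv) I would exhibit each hom-set as a retract of $\mathrm{End}(A_S)$. In (iii) the pair
$$\gamma(\phi)=\phi\circ f,\qquad \pi(\alpha)=\alpha\circ g$$
between ${\bf Pos}_S(B_S,A_S)$ and $\mathrm{End}(A_S)$ is a pair of left $\mathrm{End}(A_S)$-poset maps with $\pi\gamma(\phi)=\phi\circ fg=\phi$. In (iv) I swap the sides and use $\gamma(\phi)=g\circ\phi$ together with $\pi(\alpha)=f\circ\alpha$ as right $\mathrm{End}(A_S)$-poset maps, again satisfying $\pi\gamma=\mathrm{id}$.

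The remaining work is purely routine: verifying that each assignment is monotone (which it is, since $f$ and $g$ are monotone $S$-poset maps), lands in the correct hom-set, and is linear over the proper endomorphism pomonoid on the proper side. I do not anticipate a genuine obstacle; the only global fact needed beyond associativity of composition is $fg=\mathrm{id}_B$, together with the left/right analogs of Theorem 2.1 of \cite{L} and of Proposition \ref{1} applied to the pomonoids $\mathrm{End}(A_S)$ and $\mathrm{End}(B_S)$.
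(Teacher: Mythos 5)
Your proposal is correct and is essentially the paper's own argument: the paper likewise extracts the section $g$ with $fg=\mathrm{id}_B$ from Proposition \ref{Emb-injective object} and then applies the four hom functors ${\bf Pos}_S(B_S,-)$, ${\bf Pos}_S(-,B_S)$, ${\bf Pos}_S(-,A_S)$, ${\bf Pos}_S(A_S,-)$ to the retraction, which is exactly what your explicit pre- and post-composition maps spell out. You have merely made the functorial one-liner concrete, including the verification of equivariance on the correct side, which the paper leaves implicit.
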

\begin{proof}
Since $f: A_S\to B_S$ is Emb-injective, in view of Proposition
\ref{Emb-injective object}, there exists $g: B_S\to A_S$ such that
$fg={\text{id}}_B$. Applying the functors {\bf Pos}$_S(B_S, -)$,
{\bf Pos}$_S(-, B_S)$, {\bf Pos}$_S(-, A_S)$ and {\bf Pos}$_S(A_S,
-)$ we get the assertions ${\rm (i)}, {\rm (ii)}, {\rm (iii)}$ and
${\rm (iv)}$ respectively.
\end{proof}

\begin{proposition}
In any of the following cases {\bf Pos}$_S(A_S\times B_S, B_S)$ is
a generator in {\rm End}$(B_S)$-{\bf Pos}, for every $B_S\in${\bf
Pos}-$S$.\\
{\rm (i)} $A_S$ is an Emb-injective $S$-poset.\\
{\rm (ii)} $f: A_S\to B_S$ is an Emb-injective object in {\bf
Pos}-$S/B_S$.
\end{proposition}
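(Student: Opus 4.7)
The plan is to reduce both parts of the proposition to Theorem~\ref{5}(ii), which guarantees that whenever $f\colon A_S\to B_S$ is Emb-injective in \textbf{Pos}-$S/B_S$, the hom ${\bf Pos}_S(A_S,B_S)$ is a generator in $\text{End}(B_S)$-\textbf{Pos}. The two hypotheses (i) and (ii) call for different reductions, but both funnel through this single result.

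For case (i), I will argue that the projection $\pi_B\colon A_S\times B_S\to B_S$ is itself Emb-injective in \textbf{Pos}-$S/B_S$. Given an embedding $h\colon U\to V$ together with a commutative square over $\pi_B$ whose top edge is $u=(u_1,u_2)\colon U\to A_S\times B_S$ and whose bottom edge is $v\colon V\to B_S$, the relation $\pi_B u=vh$ forces $u_2=vh$. Hence any diagonal fill-in $s=(s_1,s_2)$ must have $s_2=v$, making the compatibility $s_2 h=u_2$ automatic. All that remains is to produce $s_1\colon V\to A_S$ with $s_1 h=u_1$, and this is supplied directly by the Emb-injectivity of $A_S$ in \textbf{Pos}-$S$. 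Setting $s=(s_1,v)$ completes the diagonal, so $\pi_B$ is Emb-injective in the slice, and Theorem~\ref{5}(ii) applied with $\pi_B$ playing the role of $f$ finishes case (i).

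For case (ii) the same strategy is unavailable, since $A_S$ itself need not be Emb-injective. Instead I will use the $S$-poset map $\varphi=(\id_{A_S},f)\colon A_S\to A_S\times B_S$, $a\mapsto(a,f(a))$. Since $\pi_A\varphi=\id_{A_S}$, the $S$-poset $A_S$ is a retract of $A_S\times B_S$ in \textbf{Pos}-$S$. Applying the contravariant hom functor ${\bf Pos}_S(-,B_S)$, which takes values in $\text{End}(B_S)$-\textbf{Pos}, converts this retraction into a split epimorphism $r\colon{\bf Pos}_S(A_S\times B_S,B_S)\to{\bf Pos}_S(A_S,B_S)$ with section ${\bf Pos}_S(\pi_A,B_S)$. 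Theorem~\ref{5}(ii) tells us the target is a generator, and this transfers to the source: if distinct $\alpha,\beta$ in $\text{End}(B_S)$-\textbf{Pos} are separated by some $k\colon{\bf Pos}_S(A_S,B_S)\to X$, then they are also separated by $k\circ r$, for any equality $\alpha kr=\beta kr$ would yield $\alpha k=\beta k$ upon composing with the section of $r$, a contradiction.

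I do not anticipate a serious obstacle. The only non-formal steps are the componentwise fill-in argument in case (i) and the short diagram chase transferring the generator property across the retract in case (ii); both are routine once the correct maps $\pi_B$ and $\varphi$ have been identified.
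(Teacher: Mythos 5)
Your proof is correct, and both parts land where the paper lands, though part (ii) takes a slightly different path. For (i) you do exactly what the paper does --- show that the projection $\pi_B\colon A_S\times B_S\to B_S$ is Emb-injective in ${\bf Pos}$-$S/B_S$ and then apply Theorem~\ref{5}(ii) --- except that the paper simply cites \cite{FF.M} for the Emb-injectivity of $\pi_B$, whereas you supply the componentwise fill-in argument yourself; that argument is sound (the condition $\pi_B s=v$ pins down the second component, and Emb-injectivity of $A_S$ supplies the first). For (ii) the paper does not pass through Theorem~\ref{5}(ii) at all: it uses the splitting $g$ of $f$ from Proposition~\ref{Emb-injective object} to form $\varphi_B=(g,\mathrm{id}_B)\colon B_S\to A_S\times B_S$ with $\pi_B\varphi_B=\mathrm{id}_B$, applies ${\bf Pos}_S(-,B_S)$ to exhibit $\mathrm{End}(B_S)$ as a retract of ${\bf Pos}_S(A\times B,B)$, and concludes by Laan's criterion that a left $\mathrm{End}(B_S)$-poset is a generator when it retracts onto $\mathrm{End}(B_S)$. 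You instead use $(\mathrm{id}_A,f)\colon A_S\to A_S\times B_S$ to make $A_S$ a retract of the product, apply the same hom functor to realize the known generator ${\bf Pos}_S(A_S,B_S)$ of Theorem~\ref{5}(ii) as a retract of ${\bf Pos}_S(A\times B,B)$, and transfer the generator property across the split epimorphism by cancelling against the section. Both arguments ultimately rest on Proposition~\ref{Emb-injective object}; yours is marginally longer but stays entirely inside the definition of generator, while the paper's is one step shorter at the cost of invoking the external retract characterization of generators from \cite{L}.
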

\begin{proof}
{\rm (i)} Consider the projection $S$-poset map $\pi_B: A\times
B\to B_S$. The authors in \cite{FF.M} showed that it is an
Emb-injective object in {\bf Pos}-$S/B_S$. Consequently, by
Theorem
\ref{5}(ii), we get the result.\\
{\rm (ii)} By Proposition \ref{Emb-injective object}, there exists
$g: B_S\to A_S$ such that $fg={\text{id}}_B$. Consider the unique
$S$-poset map $\varphi_B : B_S\to A\times B$ to product in such
away that $\pi_B\varphi_B={\text{id}}_B$. Applying the functor
{\bf Pos}$_S(-, B_S)$ we obtain
$$\xymatrix{
{\rm End}(B_S)={\bf Pos}_S(B, B) \ar@<1ex>[r]^{\bar{\pi}_B} & {\bf
Pos}_S(A\times B, B)\ar@<1ex>[l]^{\bar{\varphi_B}}}$$ So we have
$\bar{\varphi}_B\bar{\pi}_B={\text {id}}_{{\text {End}}(B_S)}$.
This means that End($B_S$) is a retract of {\bf Pos}$_S(A\times B,
B)$ as we needed (see Theorem 2.1 of \cite{L}).
\end{proof}

\begin{proposition}
Let $B_S\in${\bf Pos}-$S$ and $_T A_S$ be an $T$-$S$-biposet and
$A\times B$ be a cyclic projective $S$-poset. If $f: A_S\to B_S$
is an Emb-injective object in {\bf Pos}-$S/B_S$ and $\lambda: T\to
{\rm End}(A_S)$ is an isomorphism then $_T A$ is a generator in
$T$-{\bf Pos}.
\end{proposition}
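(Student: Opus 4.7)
The plan is to reduce the claim to the already-invoked Proposition~3.1 of~\cite{L}, which says that if an $S$-poset $X_S$ is cyclic projective then $_{\text{End}(X_S)}X$ is a generator in $\text{End}(X_S)$-{\bf Pos}. So the goal is to show that, under the given hypotheses, $A_S$ itself must be cyclic projective; the conclusion then follows by transferring the generator property along the pomonoid isomorphism $\lambda$.

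The first key step is to use $f$ to split off $A$ from the product. Define $\sigma: A \to A\times B$ by $\sigma(a)=(a,f(a))$. This map is monotone because $f$ is, and an $S$-poset map because
$$\sigma(as)=(as,f(as))=(as,f(a)s)=(a,f(a))s=\sigma(a)s.$$
Composing with the projection $p_A: A\times B \to A$ gives $p_A\sigma=\text{id}_A$, so $A_S$ is a retract of $A\times B$ in {\bf Pos}-$S$. (Notice that, strictly speaking, only the fact that $f$ is an $S$-poset map is used here; Emb-injectivity of $f$ is not needed for this step, although it sits in the background as the natural setting of the whole section.)

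Now, by hypothesis $A\times B$ is cyclic projective, so by Proposition~\ref{1} it is a retract of $S_S$. Composing the two retractions, $A_S$ itself is a retract of $S_S$, and applying Proposition~\ref{1} in the reverse direction yields that $A_S$ is cyclic projective in {\bf Pos}-$S$. Invoking Proposition~3.1 of~\cite{L} (the same statement that was used in the previous theorem to show $_{\text{End}(B_S)}B$ is a generator), we conclude that $_{\text{End}(A_S)}A$ is a generator in $\text{End}(A_S)$-{\bf Pos}.

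Finally, the given pomonoid isomorphism $\lambda: T\to \text{End}(A_S)$ identifies the left $T$-action on $A$ with the canonical left $\text{End}(A_S)$-action (via $t\cdot a=\lambda(t)(a)$), and induces an isomorphism of categories $T$-{\bf Pos}$\cong \text{End}(A_S)$-{\bf Pos} sending $_TA$ to $_{\text{End}(A_S)}A$. Since being a generator is preserved under such an isomorphism, $_TA$ is a generator in $T$-{\bf Pos}, as required. The main obstacle, as I see it, is the one already resolved in the first step: realizing that one should use $f$ itself (not the splitting $g$ from Proposition~\ref{Emb-injective object}) to produce a section of the projection $p_A$, thereby pushing the cyclic projectivity of the product down onto~$A_S$.
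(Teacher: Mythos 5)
Your proof is correct and takes essentially the same route as the paper's: both rest on the section $a\mapsto(a,f(a))$ of the projection $\pi_A : A\times B\to A$, which exhibits $A_S$ as a retract of the cyclic projective $A\times B$ and hence of $S_S$ (and you rightly observe that only the fact that $f$ is an $S$-poset map is used here, which is equally true of the paper's argument). The only difference is at the end: you conclude that $A_S$ is itself cyclic projective and cite Laan's Proposition 3.1, whereas the paper applies the functor ${\bf Pos}_S(-,A)$ to the composite retraction to exhibit $T$ as a retract of $_TA$ directly --- a purely cosmetic distinction.
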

\begin{proof}
Consider the projection $\pi_A: A\times B\to A_S$ and the unique
$S$-poset map $\varphi_A : A_S\to A\times B$ to product in such away
that $\pi_A\varphi_A=\id_A$. On the other hands, $A\times B$ is
cyclic projective $S$-poset so there exist two $S$-poset maps
\xymatrix{ {A\times B}\ar@<1ex>[r]^{\gamma} &
S_S\ar@<1ex>[l]^{\pi}} such that $\pi\gamma=1_{A\times B}$.
Applying the functor ${\bf Pos}_S(-, A_S)$ and the composition
$\pi_A\pi\gamma\varphi_A=1_A$, we obtain
$$\xymatrix{T\cong Pos_S(A, A) \ar@<1ex>[r]^{\bar{\pi}_A} & Pos_S(A\times B,
A)\ar@<1ex>[l]^{\bar{\varphi}_A} \ar@<1ex>[r]^{\bar{\pi}} &
Pos_S(S, A)\cong _T A\ar@<1ex>[l]^{\bar{\gamma}}}$$ Hence, $T$ is
a retract of $_T A$, so $_T A$ is a generator in {\bf Pos}-$S$.
\end{proof}
%-------------------------------------------------------------------------------%

\end{document}